\title{\bf Nash embedding, shape operator and Navier-Stokes equation on a Riemannian manifold}
\author{Shizan Fang$^1$\footnote{Email: Shizan.Fang@u-bourgogne.fr.}
\vspace{3mm}\\
{\footnotesize $^1$I.M.B, Universit\'e de Bourgogne, BP 47870, 21078 Dijon, France}\\
}
\date{September 20, 2019}
\def\R{\mathbb{R}}
\def\E{\mathbb{E}}
\def\T{\mathcal{T}}
\def\X{\mathcal{X}}
\def\A{\mathcal{A}}
\def\L{\mathcal L}
\def\S{\mathbb{S}}
\def\div{\textup{div}}
\def\d{\textup{d}}
\def\Ric{{\rm Ric}}
\def\<{\langle}
\def\>{\rangle}
\def\Def{\rm Def\,}
\let \dis=\displaystyle
\let\ra=\rightarrow
\newtheorem{theorem}{Theorem}[section]
\newtheorem{proposition}[theorem]{Proposition}
\newtheorem{remark}[theorem]{Remark}
\begin{document}

\maketitle
\makeatletter 
\renewcommand\theequation{\thesection.\arabic{equation}}
\@addtoreset{equation}{section}
\makeatother 

\vspace{-8mm}
\begin{abstract} What is the suitable Laplace operator on vector fields for the Navier-Stokes equation on a Riemannian manifold? In this note, by considering Nash embedding, we will try to elucidate different aspects  of different Laplace operators such as de Rham-Hodge Laplacian as well as Ebin-Marsden's Laplacian. A probabilistic representation formula for Navier-Stokes equations on a general compact Riemannian manifold is obtained when de Rham-Hodge Laplacian  is involved.
\end{abstract}

\textbf{MSC 2010}: 35Q30, 58J65

\textbf{Keywords}: Nash embedding, shape operator, vector valued Laplacians, Navier-Stokes equations, stochastic representation
\section{Introduction}

The Navier-Stokes equation on a domain of $\R^n$ or on a torus ${\mathbb T}^n$,
  \begin{equation}\label{NSE}
  \begin{cases}
  \partial_t u+(u\cdot\nabla)u-\nu\Delta u+\nabla p=0,\\
  \nabla\cdot u=0,\quad u|_{t=0}=u_0,
  \end{cases}
  \end{equation}
describe the evolution of the velocity $u$ of an incompressible viscous fluid with kinematic viscosity $\nu>0$, as well as the pressure $p$. Such equation attracts  the attention of many researchers, with an enormous quantity of publications in the literature. The model of periodic boundary conditions is introduced to simplify mathematical considerations. There is no doubt on the importance of the Navier-Stokes equation
on a Riemannian manifold, which is more suitable for models in aerodynamics, meteorology, and so on.
\vskip 2mm

In a seminal paper \cite{EM}, the Navier-Stokes equation has been considered on a compact Riemannian manifold $M$ using the framework of the group of diffeomorphisms of $M$ initiated by V. Arnold  in \cite{Arnold}; the Laplace operator involved in the text of \cite{EM} is de Rham-Hodge Laplacian, however, the authors said in the note added in proof that the convenient Laplace operator comes from deformation tensor. Let's give a brief explanation. Let $\nabla$ be the Levi-Civita connection on $M$, for a vector field $A$ on $M$, the deformation tensor $\Def(A)$ is a symmetric tensor of type $(0,2)$ defined by

\begin{equation}\label{eq1.1}
(\Def A)(X,Y)=\frac{1}{2} \Bigl(\langle \nabla_XA,Y\rangle + \langle \nabla_YA,X\rangle\Bigr),\quad X, Y\in {\mathcal X}(M),
\end{equation}

where ${\mathcal X}(M)$ is the space of vector fields on $M$.  Formula \eqref{eq1.1} says that $\Def A$ is the symmetrized part 
of $\nabla A$. Then $\Def:TM \ra S^2T^*M$ maps a vector field to a symmetric tensor of type 
$(0,2)$. Let $\Def^*: S^2T^*M\ra TM$ be the adjoint operator. Following \cite{MT} or \cite{Taylor}, the Ebin-Marsden's Laplacian 
$\hat\square$ is defined by

\begin{equation}\label{eq1.2}
\hat\square = 2\Def^*\,\Def.
\end{equation}

The following formula holds (see \cite{Taylor} or \cite{ACF}),
\begin{equation}\label{eq1.3}
\hat \square A =-\Delta A - \Ric (A) - \nabla\div (A), \quad A\in\X(M)
\end{equation}
where $\Delta A={\rm Trace} (\nabla^2A)$.
The Ebin-Marsden's Laplacian $\hat\square$ has been used in \cite{ Nagasawa} to solve Navier-Stokes equation on a 
compact Riemannian manifold; recently in \cite{Pier} to the case of Riemannian manifolds 
with negative Ricci curvature : it is quite convenient with sign minus in formula \eqref{eq1.3}. In \cite{Chan}, the authors gave severals arguments from physics in order to convive the relevance of $\hat\square$.

\vskip 2mm
On the other hand, the De Rham-Hodge Laplacian $\square = dd^*+d^*d$ was widely used in the field of Stochastic Analysis 
(see for example \cite{Malliavin, IW, Bismut, Elworthy, ELL, Stroock}). The Navier-Stokes equation with $\square$ on the sphere $S^2$ 
was considered by Temam and Wang in \cite{TemamW} , the case where Ricci tensor is positive. By Bochner-Weitzenb\"ock formula 
(see \cite{Malliavin}):

\begin{equation}\label{eq1.4}
\square  =-\Delta + \Ric ,
\end{equation}

examples of positive Ricci tensor are now confortable with $\square$. Besides, in \cite{Kobayashi}, S. Kobayashi pleaded
for the De Rham-Hodge Laplacian $\square$ in the formalism of the Navier-Stokes equation on manifolds with curvature.
\vskip 2mm

During last decade,  a lot of works have been done in order to establish V. Arnold's variational principle \cite{Arnold} for Navier-Stokes on manifolds (see  for example \cite{AC1, AC2, ACF, Cruzeiro, Luo, ACLZ}).
Connections between Navier-Stokes equations and stochastic evolution also have a quite long history: it can be traced back to a work of Chorin \cite{Chorin}.  In \cite{Flandoli}, a representation formula using noisy flow paths for 3-dimensional Navier-Stokes equation was obtained. An achievement has been realized by Constantin and Iyer in \cite{Constantin} by using stochastic flows. We also refer to \cite{Constantin} for a more complete description on the history of the developments. 
In \cite{ACF}, it was showed that the Ebin-Marsden's Laplacian is naturally involved from the point of view of variational principle; while in \cite{FangLuo}, the De Rham-Hodge Laplacian was showed to be natural for obtaining probabilistic representations.

\section{The case of $\S^n$}\label{sect2}

For reader's convenience, we first introduce some elements in Riemannian manifolds. Let $M$ be a compact Riemannian manifold, 
and $\varphi: M\ra M$ a $C^1$-diffeomorphism; the pull-back $\varphi_*A$ by $\varphi$ of a vector field $A$ is defined by
\begin{equation*}
(\varphi_*A)(x)=d\varphi(\varphi^{-1}(x))A_{\varphi^{-1}(x)},\quad x\in M;
\end{equation*}
the pull-back $\varphi^*\omega$ by $\varphi$ of a differential form $\omega$ is defined by 
\begin{equation*}
\langle \varphi^*\omega, A\rangle_x=\langle \omega_{\varphi(x)},\ d\varphi(x)A_{\varphi(x)}\rangle \quad x\in M.
\end{equation*}
Let $V$ be a vector field on $M$, $\varphi_t$ the group of diffeomorphisms associated to $V$, then the Lie derivative $\L_VA$ of $A$ with respect to $V$ is defined by 

\begin{equation*}
\L_VA=\lim_{t\rightarrow 0}\frac{(\varphi_t^{-1})_*A-A}{t},
\end{equation*}
and the Lie derivative $\L_V\omega$ by
\begin{equation*}
\L_V\omega=\lim_{t\rightarrow 0}\frac{\varphi_t^*\omega-\omega}{t}.
\end{equation*}

We have $\L_VA=[V, A]$. For a vector field $A$ on $M$, the divergence $\div(A)$ of $A$ is defined by 
\begin{equation*}
\int_M \langle\nabla f, A\rangle\, dx=-\int_M f\, \div(A)\, dx,
\end{equation*}
where $dx$ is the Riemannian volume of $M$. If $\nabla$ denotes the Levi-Civita covariant derivative, then 
\begin{equation*}
\div(A)(x)=\sum_{i=1}^n \langle \nabla_{v_i}A, v_i\rangle_{T_xM}
\end{equation*}
for any orthonormal basis  $\{v_1, \ldots, v_n\}$ of $T_xM$. A vector field $A$ on $M$ is said to be of divergence free if $\div(A)=0$. It is important to emphasize that $\L_V A$ is of divergence free if $V$ and $A$ are (see \cite{FangLuo}). 

\vskip 2mm
 In order to understand the geometry of diffusion processes on a manifold $M$, 
 Elworthy, Le Jan and Li  in \cite{ELL}  embedded $M$ into a 
  higher dimensional Riemannian manifold so that the De Rham-Hodge Laplacian $\square$ on differential forms admits suitable decompositions as sum of squares of Lie derivative of a family of vector fields. More precisely, let $\{A_1, \ldots, A_N\}$ be a 
  family of vector fields on $M$, assume they satisfy

  \begin{equation}\label{eq2.1}
  |v|_{T_xM}^2=\sum_{i=1}^N \langle A_i(x), v\rangle^2, \quad v\in T_xM,\ x\in M,
  \end{equation}
   
  \begin{equation}\label{eq2.2}
  \sum_{i=1}^N\nabla_{A_i}A_i=0,
  \end{equation}
  and
  \begin{equation}\label{eq2.3}
  \sum_{i=1}^N A_i\wedge \nabla_{V}A_i=0,\quad\hbox{\rm for any vector field } V.
  \end{equation}
Then they obtained in \cite{ELL}, for any differential form $\omega$ on $M$,

\begin{equation}\label{eq2.4}
  \square \omega=-\sum_{i=1}^N\L_{A_i}^2\omega.
 \end{equation}
 
 There is a one-to-one correspondence between the space of vector fields and that of differential 1-forms. Given a vector field $A$ (resp. differential 1-form $\omega$), we shall denote by $A^\flat$ (resp. $\omega^\sharp$) the corresponding differential 1-form (resp. vector field). The action of the de Rham--Hodge Laplacian $\square$ on the vector field $A$ is defined as follows:
  \begin{equation*}\label{Hodge}
  \square A:=(\square A^\flat)^\sharp.
  \end{equation*}
 
 Surprising enough it was remarked in \cite{FangLuo} that the decomposition \eqref{eq2.4} is in general no more valid if $\omega$ is replaced by a vector field $A$. It is why probabilistic representation formulae for Navier-Stokes equations hold only, until now, on symmetric Riemannian manifolds. 
 
 \begin{proposition}\label{prop2.1} Let $\{A_1, \ldots, A_N\}$ be a family of vector fields satisfying \eqref{eq2.1}--\eqref{eq2.3}. Define
 
 \begin{equation}\label{eq2.5}
 \T_1(B)=\sum_{i=1}^N \div(A_i)\L_{A_i}B,\quad B\in \X(M).
 \end{equation}
 Then $\T_1$ is a tensor : $\X(M)\ra\X(M)$ such that
 \begin{equation}\label{eq2.6}
 \sum_{i=1}^N \L_{A_i}^2B=-\square B - 2 \T_1(B).
 \end{equation}
 \end{proposition}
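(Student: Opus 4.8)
\emph{Proof strategy.} The plan is to transport the identity \eqref{eq2.4} from the $1$-form $B^\flat$ to the vector field $B$. The bridge is an elementary intertwining relation: since $\mathcal L_V$ is a derivation and $\langle B^\flat,X\rangle=g(B,X)$, one has $\mathcal L_V(B^\flat)=(\mathcal L_VB)^\flat+(\mathcal L_Vg)(B,\cdot)$, while $\mathcal L_Vg=2\,\Def V$ by \eqref{eq1.1}. Iterating this with $V=A_i$, summing over $i$, and using \eqref{eq2.4} for $B^\flat$, I would obtain
\[
-(\square B)^\flat=\Bigl(\sum_{i}\mathcal L_{A_i}^2B\Bigr)^\flat+2\sum_i(\mathcal L_{A_i}g)(\mathcal L_{A_i}B,\cdot)+\sum_i\bigl(\mathcal L_{A_i}^2g\bigr)(B,\cdot),
\]
so that \eqref{eq2.6} becomes the statement that the two correction sums add up to $2\,\T_1(B)^\flat$. (Alternatively one can bypass \eqref{eq2.4} entirely, expand $\mathcal L_{A_i}^2B=[A_i,[A_i,B]]$ via the torsion-free identity $[X,Y]=\nabla_XY-\nabla_YX$, and appeal to the Weitzenb\"ock formula \eqref{eq1.4} only at the end; this produces the same core identity.)

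The second step is to collapse the correction terms using the three hypotheses. Set $h_i:=\nabla A_i$, viewed as the $(1,1)$-tensor $V\mapsto\nabla_VA_i$, so that $\div(A_i)=\mathrm{tr}\,h_i$. The inputs I would use are: (i) by \eqref{eq2.1} the family $\{A_i\}$ behaves like an orthonormal frame under contraction, whence $\sum_i\nabla_{A_i}\nabla_{A_i}B=\Delta B$ (also using \eqref{eq2.2}) and $\sum_i\langle A_i,w\rangle A_i=w$; (ii) differentiating the completeness relation $\sum_i\langle A_i,\cdot\rangle A_i=\mathrm{id}$ and combining with \eqref{eq2.3} gives $\sum_i\langle A_i,C\rangle\,\nabla_VA_i=0$ for all vector fields $V,C$, which kills $\sum_i\nabla_{\nabla_{A_i}B}A_i$ and, with \eqref{eq2.2}, yields $\sum_i\div(A_i)A_i=0$ --- this in turn shows that $\T_1$ is a genuine tensor (its obstruction to $C^\infty$-linearity, $\sum_i\div(A_i)A_i(f)$, vanishes) and that $\T_1(B)=-\sum_i\div(A_i)\,\nabla_BA_i$; (iii) the Ricci identity, \eqref{eq2.2}, and the curvature contraction $\sum_iR(A_i,B)A_i=-\Ric(B)$. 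A lengthy but mechanical bookkeeping with Leibniz rules then reduces the whole identity to
\[
\sum_i\mathcal L_{A_i}^2B=-\square B+2\Bigl(\Ric(B)+\sum_i\nabla_{\nabla_BA_i}A_i\Bigr).
\]

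It then remains to establish $\Ric=\sum_i\bigl(\div(A_i)\,h_i-h_i^2\bigr)$ (with $h_i^2:=h_i\circ h_i$): granting this, one gets $\Ric(B)+\sum_i\nabla_{\nabla_BA_i}A_i=\Ric(B)+\sum_ih_i^2(B)=\sum_i\div(A_i)\,h_i(B)=\sum_i\div(A_i)\,\nabla_BA_i=-\T_1(B)$ by (ii), and \eqref{eq2.6} follows. This last identity is precisely the Gauss equation, and this is where the Nash embedding and the shape operator enter: writing $M\hookrightarrow\R^N$ isometrically with each $A_i$ the tangential part of the $i$-th coordinate field (the setting of \cite{ELL}), the endomorphism $h_i=\nabla A_i$ is the shape operator $S_{A_i^\perp}$ of the embedding in the direction of the normal component $A_i^\perp$ of the $i$-th coordinate vector; since $\sum_iA_i^\perp\otimes A_i^\perp$ is the orthogonal projection onto the normal bundle, summing the Gauss equation over $i$ gives exactly $\sum_i\bigl(\div(A_i)h_i-h_i^2\bigr)=\Ric$. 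I expect the main difficulty to lie in the middle step --- marshalling the cancellations among the second-order terms down to this single geometric identity --- together with keeping all curvature sign conventions consistent with \eqref{eq1.4}.
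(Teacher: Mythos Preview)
Your route is quite different from the paper's. The paper does \emph{not} carry out a pointwise expansion of $\sum_i\mathcal L_{A_i}^2B$ at all: after observing that \eqref{eq2.7} forces $\T_1(B)=-\sum_i\div(A_i)\nabla_BA_i$ (hence $\T_1$ is $C^\infty(M)$-linear), it simply quotes the integrated identity
\[
\int_M \Delta(\omega(B))\,dx=-\int_M(\square\omega)(B)\,dx-\int_M\omega\Bigl(\sum_i\mathcal L_{A_i}^2B\Bigr)\,dx-2\int_M\omega(\T_1(B))\,dx
\]
from \cite{FangLuo}, notes that the left side vanishes and that $\int_M(\square\omega)(B)\,dx=\int_M\omega(\square B)\,dx$, and concludes \eqref{eq2.6} since $\omega$ is arbitrary. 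So the paper's argument is a two-line weak-equals-strong trick, whereas yours is a genuine local computation via the intertwining $\mathcal L_V(B^\flat)=(\mathcal L_VB)^\flat+(\mathcal L_Vg)(B,\cdot)$ and the Weitzenb\"ock formula. Your approach is more self-contained and transparent about where each hypothesis is used; the paper's is shorter but outsources the computation.

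There is, however, a logical gap in your final step. Proposition~\ref{prop2.1} is stated for an \emph{arbitrary} family $\{A_1,\dots,A_N\}$ satisfying \eqref{eq2.1}--\eqref{eq2.3}, yet to establish $\Ric=\sum_i(\div(A_i)h_i-h_i^2)$ you pass to a Nash embedding and identify $h_i$ with a shape operator. That argument proves the identity only for the particular family $A_i(x)=(dJ(x))^*e_i$, not for every family satisfying the hypotheses; as written, your proof of \eqref{eq2.6} is therefore incomplete in the generality claimed. (The paper itself only records the analogous relation $\T_1=-\Ric-\T_2$ as Proposition~\ref{prop3.7}, \emph{after} specializing to the embedded setting.) To close the gap intrinsically you should derive the Ricci identity directly from \eqref{eq2.1}--\eqref{eq2.3}: your own observation that differentiating the completeness relation together with \eqref{eq2.3} gives $\sum_i\langle A_i,C\rangle\nabla_VA_i=0$ already yields $\sum_i\nabla_{\nabla_{A_i}B}A_i=0$ (expand $\nabla_{A_i}B$ over an orthonormal basis of $T_xM$ and apply the identity with $C$ a basis vector), and a second differentiation of the completeness relation, combined with \eqref{eq2.2}, produces $\sum_i\nabla_{A_j}\nabla_BA_j$ in terms of $\div(A_i)h_i$ and $h_i^2$; feeding this into $\Ric(B)=\sum_j R(A_j,B)A_j$ gives the desired formula without ever invoking an embedding.
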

 
 \begin{proof} Conditions \eqref{eq2.2} and \eqref{eq2.3} imply the following identity (see \cite{ELL, FangLuo})
 \begin{equation}\label{eq2.7}
 \sum_{i=1}^N \div(A_i)A_i=0.
 \end{equation}
 By torsion free, $\L_{A_i}B=[A_i,B]=\nabla_{A_i}B-\nabla_BA_i$. Then according to \eqref{eq2.7}, 
 $$\sum_{i=1}^N \div(A_i)\L_{A_i}B=-\sum_{i=1}^N \div(A_i)\nabla_BA_i.$$
 It follows that $\T_1(fB)=f\T_1(B)$ for any smooth function $f$ on $M$. Concerning equality \eqref{eq2.6}, it was already calculated 
 in proof of Theorem 3.7 of \cite{FangLuo} that 
 \begin{equation*}
 \int_M \Delta (\omega(B))\, dx=- \int_M( \square\omega)(B)\,dx -\int_M \omega(\sum_{i=1}^N
\L_{A_i}^2B)\,dx -2 \int_M\omega(\T_1(B))\, dx.
\end{equation*}
The left hand side of above equality vanishes and 
$\int_M( \square\omega)(B)\,dx=\int_M \omega(\square B)\,dx$,  equality \eqref{eq2.6} follows.
 \end{proof}
 
 \vskip 2mm
 
The tensor $\T_1$ has explicit expression on $\S^n$ (see \cite{FangLuo}). For the sake of self-contained, 
we give here a short presentation. We denote by $\langle\, , \rangle$ the canonical inner product of $\R^{n+1}$. Let $x\in \S^n$, the tangent space $T_x\S^n$ of $\S^n$ at the point $x$ is given by
  \begin{equation*}
  T_x\S^n=\bigl\{v\in\R^{n+1};\ \langle v, x\rangle=0\bigr\}.
  \end{equation*}
Then the orthogonal projection $P_x: \R^{n+1}\ra T_x\S^n$ has the expression:
  \begin{equation*}
  P_x(y)=y-\langle x, y\rangle\, x.
  \end{equation*}

Let ${\mathcal B}=\{e_1, \cdots, e_{n+1}\}$ be an orthonormal basis of $\R^{n+1}$; then the vector field $A_i$ 
defined by $A_i(x)=P_x(e_i)$ has the expression: $A_i(x)=e_i-\langle x, e_i\rangle\, x$ for $i=1, \cdots, n+1$. 
Let $v\in T_x\S^n$ such that $|v|=1$, consider
  \begin{equation*}
  \gamma(t)=x\,\cos t+v\, \sin t.
  \end{equation*}
Then $\{\gamma(t);\ t\in [0,1]\}$ is the geodesic on $\S^n$ such that $\gamma(0)=x, \gamma'(0)=v$. We have $A_i(\gamma(t))= e_i-\langle\gamma(t),e_i\rangle\,\gamma(t)$. Taking the derivative with respect to $t$ and at $t=0$, we get
  \begin{equation}\label{eq2.8}
  (\nabla_vA_i)(x)=P_x\bigl(-\langle v,e_i\rangle x-\langle x, e_i\rangle v\bigr)=-\langle x, e_i\rangle v.
  \end{equation}
It follows that
  \begin{equation}\label{eq2.9}
  \div(A_i)=-n\langle x, e_i\rangle.
  \end{equation}
Hence in this case it is obvious that
  \begin{equation}\label{eq2.10}
  \sum_{i=1}^{n+1} \div(A_i)A_i=-n\sum_{i=1}^{n+1} \bigl(\langle x,e_i\rangle e_i-\langle x, e_i\rangle^2x \bigr) = -n (x-x)= 0.
  \end{equation}
Replacing $v$ by $A_i$ in \eqref{eq2.8}, we have $\dis \nabla_{A_i}A_i=-\langle x, e_i\rangle e_i +\langle x,e_i\rangle^2 x$; therefore summing over $i$, we get
  \begin{equation}\label{eq2.11}
  \sum_{i=1}^{n+1}\nabla_{A_i}A_i=0.
  \end{equation}

Now let $v\in T_x\S^n$ and $a,b\in T_x\S^n$, we have
  \begin{align*}
  \langle A_i\wedge \nabla_vA_i, a\wedge b\rangle&=\langle A_i,a\rangle\langle\nabla_vA_i,b\rangle-\langle A_i,b\rangle\langle\nabla_vA_i,a\rangle\\
  &=-\langle a,e_i\rangle\langle x,e_i\rangle\langle v,b\rangle+\langle x,e_i\rangle\langle b,e_i\rangle\langle v,a\rangle.
  \end{align*}
Summing over $i$ yields
  \begin{equation}\label{eq2.12}
  \sum_{i=1}^{n+1}\langle A_i\wedge \nabla_vA_i, a\wedge b\rangle=\langle a,x\rangle\langle v,b\rangle-\langle x,b\rangle\langle v,a\rangle=0.
  \end{equation}
  
 By \eqref{eq2.10} and \eqref{eq2.11}, we see that the family of vector fields $\{A_1, \ldots, A_{n+1}\}$ 
 satisfy above conditions \eqref{eq2.1}--\eqref{eq2.3}.
Let $B$ be a vector field on $\S^n$; by \eqref{eq2.8}, $\nabla_B A_i=-\<x, e_i\> B$. Using $\dis \L_{A_i}B=\nabla_{A_i}B-\nabla_B A_i$ and combining with \eqref{eq2.9} and \eqref{eq2.10}, we get that
  \begin{equation}\label{eq2.13}
 \T_1(B)= \sum_{i=1}^m \div(A_i) \L_{A_i}B=- n B.
  \end{equation}

\vskip 2mm
In what follows, we will compute $\dis\sum_{i=1}^{n+1}\L_{A_i}^2B$ directly on $\S^n$. Again by torsion free and using \eqref{eq2.8},
we have
\begin{equation*}
\L_{A_i}B=\nabla_{A_i}B-\nabla_B A_i=\nabla_{A_i}B +\langle x, e_i\rangle B,
\end{equation*}
and $\dis \L_{A_i}^2B=\nabla_{A_i}(\L_{A_i}B)-(\nabla_{\L_{A_i}B}) A_i$. Using \eqref{eq2.8} for last term, taking $\nabla_{A_i}$ on the
two sides of above equality, we get

\begin{equation*}
\L_{A_i}^2B=\nabla_{A_i}\nabla_{A_i} B + \nabla_{A_i}(\langle x, e_i\rangle B) + \langle x, e_i\rangle \L_{A_i}B.
\end{equation*}

But $\dis \nabla_{A_i}(\langle x, e_i\rangle B)=\langle A_i, e_i\rangle B + \langle x,e_i\rangle \nabla_{A_i}B$; therefore
\begin{equation*}
\L_{A_i}^2B=\nabla_{A_i}\nabla_{A_i} B+ \langle A_i, e_i\rangle B+ 2  \langle x,e_i\rangle\nabla_{A_i}B
- \langle x,e_i\rangle\nabla_BA_i.
\end{equation*}

We have, by \eqref{eq2.10}
\begin{equation*}
\sum_{i=1}^{n+1}  \langle x,e_i\rangle \nabla_{A_i}=-\frac{1}{n}\sum_{i=1}^{n+1}  \div(A_i) \nabla_{A_i}B= 0.
\end{equation*}
Besides
 $\dis\sum_{i=1}^{n+1} \langle A_i, e_i\rangle=\sum_{i=1}^{n+1}(1-\langle e_i,x\rangle^2)=n$ and
$\dis\sum_{i=1}^{n+1} \langle x,e_i\rangle\nabla_BA_i
=\sum_{i=1}^{n+1}  \langle x,e_i\rangle^2B=B.$
  
According to \eqref{eq2.11}, 
\begin{equation*}
\Delta B=\sum_{i=1}^{n+1}\nabla_{A_i}\nabla_{A_i}B.
\end{equation*}
Therefore combining above calculations, we get
\begin{equation}\label{eq2.14}
\sum_{i=1}^{n+1}\L_{A_i}^2B=\Delta B + (n+1)B.
\end{equation}

Note that $\Ric(B)=(n-1)B$ on $\S^n$, and by Bochner-Weitzenb\"ock formula $-\square = \Delta -\Ric$, we see that 
\eqref{eq2.14} is compatible with \eqref{eq2.6} together with \eqref{eq2.13}.

\begin{proposition}\label{prop2.2} Let $\dis \hat\Delta B=\sum_{i=1}^{n+1} \L_{A_i}^2B$. Then the following dimension free identity holds
\begin{equation*}
-\int_M \langle\hat\Delta B, B\rangle\, dx=2\int_M |\Def(B)|^2\, dx - 2\int_M|B|^2\,dx,\quad\hbox{\rm for any }\div(B)=0.
\end{equation*}
\end{proposition}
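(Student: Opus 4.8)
The plan is to reduce the statement to the two identities already available, \eqref{eq2.14} and \eqref{eq1.3}, and to observe that the dimensional constants cancel. First I would invoke \eqref{eq2.14}, which says $\hat\Delta B=\Delta B+(n+1)B$ with $\Delta B=\mathrm{Trace}(\nabla^2B)$ the connection (rough) Laplacian. Pairing with $B$ and integrating over $M=\S^n$ (compact, no boundary term),
$$
-\int_M\langle\hat\Delta B,B\rangle\,dx=-\int_M\langle\Delta B,B\rangle\,dx-(n+1)\int_M|B|^2\,dx=\int_M|\nabla B|^2\,dx-(n+1)\int_M|B|^2\,dx,
$$
where the last equality is the integration-by-parts formula for the connection Laplacian, $\int_M\langle\Delta B,B\rangle\,dx=-\int_M|\nabla B|^2\,dx$.

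Next I would bring in the deformation tensor. Since $\hat\square=2\Def^*\Def$ with $\Def^*$ the $L^2$-adjoint of $\Def$, one has $\int_M\langle\hat\square B,B\rangle\,dx=2\int_M|\Def(B)|^2\,dx$, the norm of $\Def(B)$ being the Hilbert--Schmidt norm of the symmetric $(0,2)$-tensor, consistently with the normalization in \eqref{eq1.3}. On the other hand \eqref{eq1.3} gives $\hat\square B=-\Delta B-\Ric(B)-\nabla\div(B)$; using $\div(B)=0$ and integrating the pairing with $B$ against $dx$,
$$
2\int_M|\Def(B)|^2\,dx=-\int_M\langle\Delta B,B\rangle\,dx-\int_M\langle\Ric(B),B\rangle\,dx=\int_M|\nabla B|^2\,dx-\int_M\langle\Ric(B),B\rangle\,dx.
$$
On $\S^n$ one has $\Ric(B)=(n-1)B$, hence $\int_M|\nabla B|^2\,dx=2\int_M|\Def(B)|^2\,dx+(n-1)\int_M|B|^2\,dx$.

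Finally I would substitute this into the first display to obtain
$$
-\int_M\langle\hat\Delta B,B\rangle\,dx=2\int_M|\Def(B)|^2\,dx+(n-1)\int_M|B|^2\,dx-(n+1)\int_M|B|^2\,dx=2\int_M|\Def(B)|^2\,dx-2\int_M|B|^2\,dx,
$$
which is the asserted dimension-free identity. The only real point of care — the ``main obstacle,'' such as it is — is bookkeeping of conventions: making sure that $\Def^*$ is exactly the $L^2$-adjoint of $\Def$, so that the pairing yields precisely $2\int_M|\Def(B)|^2\,dx$ with the same tensor normalization used in \eqref{eq1.3}, and that the sign in the integration by parts for the rough Laplacian is the one quoted above. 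Everything else is linear, and the cancellation of the explicit $n$'s (which is what makes the identity dimension free) is automatic once \eqref{eq2.14} and \eqref{eq1.3} are combined with $\Ric=(n-1)\,\mathrm{id}$ on $\S^n$.
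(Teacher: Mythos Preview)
Your proof is correct and uses the same ingredients as the paper --- \eqref{eq2.14}, \eqref{eq1.3}, $\Ric=(n-1)\,\mathrm{Id}$ on $\S^n$, and $\hat\square=2\Def^*\Def$ --- but you assemble them in a slightly more roundabout way: you integrate each piece separately (introducing the intermediate quantity $\int_M|\nabla B|^2\,dx$ via integration by parts for the rough Laplacian) and then let the $n$'s cancel at the end. The paper is more direct: it combines \eqref{eq1.3} and \eqref{eq2.14} at the \emph{pointwise} level to obtain $\hat\Delta B=-\hat\square B+2B$ for $\div(B)=0$ (the dimension-free constant already appears here), and then a single integration, together with $\int_M\langle\hat\square B,B\rangle\,dx=2\int_M|\Def(B)|^2\,dx$, gives the result immediately. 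Your route costs one extra step but is perfectly valid, and your remark about the bookkeeping of the $\Def$/$\Def^*$ normalization is well taken.
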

\begin{proof} By \eqref{eq1.3},  $\Delta B=-\hat\square B + \Ric(B)$ if $\div(B)=0$,. Then 
$\dis\hat\Delta B=-\hat\square B + 2B$, due to  \eqref{eq2.14}. The result follows.
\end{proof}

\section{Nash embedding and sum of squares of Lie derivatives}\label{sect3}

In what follows, we will find links between above different objects when the compact Riemannian manifold $M$ is isometrically
embedded in an Euclidian space $\R^N$. Let $J: M\ra \R^N$ be a Nash embedding, that is, for any $x\in M$,
\begin{equation}\label{eq3.1}
|dJ(x)v|_{\R^N}=|v|_{T_xM},\quad v\in T_xM.
\end{equation}
 Denote by $(dJ(x))^*: \R^N\ra T_xM$ the adjoint operator of 
$dJ(x)$: 
\begin{equation*}
\langle (dJ(x))^*\xi, v\rangle_{T_xM}=\langle\xi, dJ(x)v\rangle_{\R^N}, \quad \xi\in\R^N, v\in T_xM.
\end{equation*}

For each $x\in M$, we denote $\dis E_x=dJ(x) (T_xM)$ and $E_x^\perp$ the orthogonal in $\R^N$ to $E_x$. Define
\begin{equation*}
\Lambda_x=dJ(x)(dJ(x))^*.
\end{equation*}

We have
\begin{equation*}
\langle \xi-\Lambda_x\xi, dJ(x)v\rangle=\langle \xi, dJ(x)v\rangle- \langle dJ(x)(dJ(x))^*\xi, dJ(x)v\rangle=0.
\end{equation*}
Therefore $\dis \Lambda_x: \R^N\ra E_x$ is the orthogonal projection. Set $\dis \Lambda_x^\perp={\rm Id}-\Lambda_x$.
 By polarization of \eqref{eq3.1}, we have
 \begin{equation*}
 (dJ(x))^*dJ(x)={\rm Id}_{T_xM}.
 \end{equation*}

The mapping $\Lambda: M\ra L(\R^N, \R^N)$ from $M$ to the space of linear maps of $\R^N$ is smooth. For $\xi\in \R^N$ given, $v\in T_xM$, 
consider a smooth curve $\gamma$ on $M$ such that $\gamma(0)=x, \gamma'(0)=v$, we denote

\begin{equation*}
d\Lambda_x(v) \xi=\frac{d}{dt}_{|_{t=0}}\Bigl(\Lambda_{\gamma(t)}\xi\Bigr)\in\R^N.
\end{equation*}

\begin{proposition}\label{prop3.1}(see \cite{Stroock},ch.5) Let $x\in M$, for any $v\in T_xM$, it holds true:
\begin{equation}\label{eq3.2}
(i)\quad \Lambda_x\circ  d\Lambda_x=d\Lambda_x\circ \Lambda_x^\perp,\hskip 15mm
(ii)\quad \Lambda^\perp_x\circ  d\Lambda_x=d\Lambda_x\circ \Lambda_x.
\end{equation}
\end{proposition}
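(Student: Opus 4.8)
The plan is to obtain both identities directly from the single algebraic fact that $\Lambda_x$ is a projection, namely $\Lambda_x^2=\Lambda_x$, by differentiating this relation along a curve on $M$. Fix $x\in M$ and $v\in T_xM$, and choose a smooth curve $\gamma$ on $M$ with $\gamma(0)=x$ and $\gamma'(0)=v$; write $\Lambda_t=\Lambda_{\gamma(t)}$, so that by definition $d\Lambda_x(v)=\frac{d}{dt}\big|_{t=0}\Lambda_t$ (acting on each fixed $\xi\in\R^N$). Since every $\Lambda_t:\R^N\ra E_{\gamma(t)}$ is the orthogonal projection, as was shown above, we have $\Lambda_t^2=\Lambda_t$ for all $t$ near $0$.

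First I would differentiate $\Lambda_t^2=\Lambda_t$ at $t=0$ by the product rule for the $L(\R^N,\R^N)$-valued maps $t\mapsto\Lambda_t\xi$, obtaining
\[
d\Lambda_x(v)\circ\Lambda_x+\Lambda_x\circ d\Lambda_x(v)=d\Lambda_x(v).
\]
From this, identity $(i)$ follows at once by writing $\Lambda_x^\perp={\rm Id}-\Lambda_x$: indeed
$d\Lambda_x(v)\circ\Lambda_x^\perp=d\Lambda_x(v)-d\Lambda_x(v)\circ\Lambda_x=\Lambda_x\circ d\Lambda_x(v)$,
where the last equality is the displayed relation. Identity $(ii)$ follows symmetrically, again from the displayed relation:
$\Lambda_x^\perp\circ d\Lambda_x(v)=d\Lambda_x(v)-\Lambda_x\circ d\Lambda_x(v)=d\Lambda_x(v)\circ\Lambda_x$.
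Since $v\in T_xM$ is arbitrary, this establishes $(i)$ and $(ii)$.

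There is essentially no analytic obstacle here; the only points deserving a word of care are that $d\Lambda_x(v)$ is well defined independently of the chosen curve $\gamma$ — guaranteed by the smoothness of $\Lambda:M\ra L(\R^N,\R^N)$ noted above — and that no use is made of the self-adjointness of $\Lambda_x$, only of $\Lambda_x^2=\Lambda_x$. As a sanity check and to exhibit the geometric content, one may multiply the differentiated relation on the left by $\Lambda_x$ to get $\Lambda_x\circ d\Lambda_x(v)\circ\Lambda_x=0$, and likewise (differentiating $(\Lambda_x^\perp)^2=\Lambda_x^\perp$, or equivalently using $d\Lambda_x^\perp=-d\Lambda_x$) one gets $\Lambda_x^\perp\circ d\Lambda_x(v)\circ\Lambda_x^\perp=0$; together these say precisely that $d\Lambda_x(v)$ maps $E_x$ into $E_x^\perp$ and $E_x^\perp$ into $E_x$, i.e. the derivative of the projection interchanges the tangent and normal bundles, which is exactly what $(i)$ and $(ii)$ encode.
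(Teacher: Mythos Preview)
Your proof is correct and follows essentially the same approach as the paper: both differentiate the projection identity $\Lambda_x^2=\Lambda_x$ to obtain $d\Lambda_x(v)\circ\Lambda_x+\Lambda_x\circ d\Lambda_x(v)=d\Lambda_x(v)$, from which $(i)$ and $(ii)$ are immediate. The only cosmetic difference is that the paper derives $(ii)$ by differentiating $(\Lambda_x^\perp)^2=\Lambda_x^\perp$ separately, whereas you extract both identities from the single differentiated relation; these are equivalent one-line rearrangements.
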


\begin{proof}
We have $\dis \Lambda_x(\Lambda_x\xi)=\Lambda_x\xi$.  Taking the derivative with respect to $x$ in direction $v$ 
on two sides of this equality, we get
\begin{equation*}
d\Lambda_x(v)(\Lambda_x\xi)+\Lambda_x(d\Lambda_x(v)\xi)=d\Lambda_x(v)\xi,
\end{equation*}
which yields the first equality in \eqref{eq3.2}. For $(ii)$, it suffices to use 
$\dis \Lambda_x^\perp(\Lambda_x^\perp\xi)=\Lambda_x^\perp \xi$ and
$d\Lambda_x^\perp(v)=-d\Lambda_x(v)$.

\end{proof}

Now let $\xi\in E_x$, then by $(i)$ in \eqref{eq3.2}, $\dis \Lambda_x(d\Lambda_x(v)\xi)=0$, which implies that
$d\Lambda_x(v)\xi\in E_x^\perp$. This means that $d\Lambda_x(v)$ sends $E_x$ into $E_x^\perp$. By $(ii)$ in \eqref{eq3.2},
$d\Lambda_x(v)$ sends $E_x^\perp$ into $E_x$.

\vskip 2mm
Now we introduce the Levi-Civita covariant derivative on $M$: for $B\in\X(M)$ and $v\in T_xM$, set
\begin{equation}\label{eq3.3}
(\nabla_vB)(x)=(dJ(x))^*\Bigl(\partial_v(dJ(\cdot)B_\cdot)(x)\Bigr),
\end{equation}
where $\partial_v$ denotes the derivative on the manifold $M$ in direction of $v$. If $B_x=(dJ(x))^*\xi$ for a fix $\xi$, then
\begin{equation*}
\langle \nabla_vB, u\rangle_{T_xM}=\langle d\Lambda_x(v)\xi, dJ(x)u\rangle_{\R^N},\quad u\in T_xM.
\end{equation*}

\vskip 2mm
The {\it second fundamental form} $\alpha$ on $M$ is defined as follows: for $u,v\in T_xM$,
\begin{equation}\label{eq3.4}
\alpha_x(u,v)=d\Lambda_x(u)\cdot dJ(x)v.
\end{equation}

By Proposition \ref{prop3.1}, $\alpha_x(u,v)\in E_x^\perp$. For $X, Y\in\X(M)$, we define $\alpha(X,Y)(x)=\alpha_x(X_x,Y_x)$.
 It is clear that 
$\alpha(X, fY)=f\,\alpha(X,Y)$. In what follows, we will see that $\alpha$ is symmetric bilinear application: 
$\alpha(X,Y)=\alpha(Y,X)$.

\vskip 2mm
Let $X$ be a vector field on $M$, then $x\ra dJ(x)X_x$ is a $\R^N$-valued function; therefore there is a function $\bar X: \R^N\ra\R^N$ such that $\bar X(J(x))=dJ(x)X_x$ for $x\in M$. Let $Y$ be another vector field on $M$, and $\bar Y: \R^N\ra \R^N$ such that
$\bar Y(J(x))=dJ(x)Y_x, x\in M$. Taking the derivative of $x\ra \bar Y(J(x))$ along $X$:

\begin{equation}\label{eq3.5}
\partial_X\bigl(\bar Y\circ J\bigr)(x)=\bar Y'(J(x))\cdot dJ(x)X_x=\bar Y'(J(x))\bar X(J(x))=(D_{\bar X}\bar Y)(J(x)),
\end{equation}
where $\bar Y'$ denotes the differential of $\bar Y$ and $D_{\bar X}$ the derivative on $\R^N$ with respect to $\bar X$. It follows that
\begin{equation}\label{eq3.6}
[\bar X, \bar Y](J)=\partial_X(\bar Y(J))-\partial_Y(\bar X(J)).
\end{equation}

Since $\bar Y(J)(x)\in E_x$, then $\Lambda_x(\bar Y(J(x)))=\bar Y(J(x))$. Taking the derivative with respect to $X$ and according to 
definition \eqref{eq3.4}, we get

\begin{equation*}
\begin{split}
\partial_X(\bar Y(J))(x)&=d\Lambda_x(X_x)\cdot \bar Y(J(x))+ \Lambda_x\Bigl(\partial_X(\bar Y(J))(x)\Bigr)\\
&=\alpha_x(X_x,Y_x)+ \Lambda_x\Bigl(\partial_X(\bar Y(J))(x)\Bigr).
\end{split}
\end{equation*}

Combining this with \eqref{eq3.6}, we get
\begin{equation*}
[\bar X, \bar Y](J)=\alpha(Y,X)-\alpha(X,Y)+ \Lambda_\cdot \bigl( [\bar X, \bar Y](J)\bigr).
\end{equation*}

By property of embedding, $[\bar X, \bar Y](J(x))\in E_x$ so that $\Lambda_\cdot \bigl( [\bar X, \bar Y](J)\bigr)=[\bar X, \bar Y]$. Then
the symmetry of $\alpha$ follows.

\vskip 2mm
Now according to definition \eqref{eq3.3} and to \eqref{eq3.5}, we get $\dis (\nabla_XY)(x)=(dJ(x))^*(D_{\bar X}\bar Y)(J(x))$. Therefore 
the following orthogonal decomposition holds

\begin{equation}\label{eq3.7}
(D_{\bar X}\bar Y)(J(x))=dJ(x) (\nabla_XY)(x)+\alpha_x(X_x,Y_x).
\end{equation}
\vskip 2mm

Now let $x\ra V_x\in E_x^\perp$ be a  field of normal vectors to $M$. Set, for $X\in\X(M)$,
\begin{equation}\label{eq3.8}
\A(X,V)(x)=-(dJ(x))^*(\partial_X V)(x).
\end{equation}

Since $x\ra \langle dJ(x)Y_x, V_x\rangle_{\R^N}=0$, taking the derivative with respect to $X$ yields
\begin{equation*}
\langle \partial_X (dJ(\cdot)Y)(x), V_x\rangle_{\R^N} + \langle dJ(x)Y_x, (\partial_XV)(x)\rangle_{\R^N}=0.
\end{equation*}
 
 By \eqref{eq3.7} and \eqref{eq3.8}, we get
 \begin{equation}\label{eq3.9}
\langle \alpha(X,Y), V\rangle= \langle \A(X,V), Y\rangle .
\end{equation}
 
From above expression, $\A(X, fV)=f\A(X,V)$. $\A$ is called {\it shape operato}r of $M$. 

\begin{proposition}\label{prop3.2} Let $B\in\X(M)$ defined by $B_x=(dJ(x))^*\xi$ for a fixed $\xi\in\R^N$. Then 
\begin{equation}\label{eq3.10}
\nabla_XB=\A(X, \Lambda_\cdot^\perp\xi),\quad X\in\X(M).
\end{equation}
\end{proposition}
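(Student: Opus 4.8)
The plan is to differentiate the defining relation $B_x = (dJ(x))^*\xi$ along $X$ and project, exploiting the orthogonal decomposition machinery already set up. First I would recall from \eqref{eq3.3} that $(\nabla_X B)(x) = (dJ(x))^*\bigl(\partial_X(dJ(\cdot)B_\cdot)(x)\bigr)$, and that for this particular $B$ we have $dJ(x)B_x = dJ(x)(dJ(x))^*\xi = \Lambda_x\xi$. Hence $\partial_X(dJ(\cdot)B_\cdot)(x) = d\Lambda_x(X_x)\xi$, so that
\begin{equation*}
(\nabla_X B)(x) = (dJ(x))^*\bigl(d\Lambda_x(X_x)\xi\bigr).
\end{equation*}
The next step is to split $\xi = \Lambda_x\xi + \Lambda_x^\perp\xi$ and analyze the two contributions separately.

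For the tangential part $\Lambda_x\xi \in E_x$, Proposition~\ref{prop3.1}(i) gives $\Lambda_x\bigl(d\Lambda_x(X_x)\Lambda_x\xi\bigr) = 0$, i.e. $d\Lambda_x(X_x)\Lambda_x\xi \in E_x^\perp$. Since $(dJ(x))^*$ kills $E_x^\perp$ — indeed for any $\eta\in E_x^\perp$ and $u\in T_xM$, $\langle (dJ(x))^*\eta, u\rangle = \langle \eta, dJ(x)u\rangle = 0$ because $dJ(x)u\in E_x$ — this term contributes nothing to $\nabla_X B$. Thus
\begin{equation*}
(\nabla_X B)(x) = (dJ(x))^*\bigl(d\Lambda_x(X_x)\Lambda_x^\perp\xi\bigr).
\end{equation*}
It remains to identify the right-hand side with $\A(X,\Lambda_\cdot^\perp\xi)(x)$. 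Writing $V_x = \Lambda_x^\perp\xi$, which is a field of normal vectors by construction, I would use the definition \eqref{eq3.8}, $\A(X,V)(x) = -(dJ(x))^*(\partial_X V)(x)$, and show $\partial_X V = -d\Lambda_x(X_x)\Lambda_x^\perp\xi$ modulo a term annihilated by $(dJ(x))^*$. Concretely, $V_x = \xi - \Lambda_x\xi$, so $\partial_X V = -d\Lambda_x(X_x)\xi = -d\Lambda_x(X_x)\Lambda_x\xi - d\Lambda_x(X_x)\Lambda_x^\perp\xi$; the first summand lies in $E_x^\perp$ by Proposition~\ref{prop3.1}(i) and is therefore killed by $(dJ(x))^*$, leaving $-(dJ(x))^*(\partial_X V) = (dJ(x))^*\bigl(d\Lambda_x(X_x)\Lambda_x^\perp\xi\bigr)$, which matches the expression above.

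The one point requiring mild care — the main (though modest) obstacle — is the consistent use of Proposition~\ref{prop3.1}: one must apply part (i) to the $E_x$-valued vector $\Lambda_x\xi$ to conclude $d\Lambda_x(X_x)$ sends it into $E_x^\perp$, and keep track that $(dJ(x))^*$ annihilates $E_x^\perp$ so that only the normal component of $\xi$ survives. Everything else is bookkeeping with the decomposition $\mathrm{Id} = \Lambda_x + \Lambda_x^\perp$ and the definitions \eqref{eq3.3}, \eqref{eq3.8}. Assembling these identities yields \eqref{eq3.10}.
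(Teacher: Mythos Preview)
Your proof is correct and follows the same skeleton as the paper's: compute $(\nabla_X B)(x)=(dJ(x))^*\bigl(\partial_X(\Lambda_\cdot\xi)\bigr)$ via \eqref{eq3.3}, then identify this with $\A(X,\Lambda_\cdot^\perp\xi)$ via \eqref{eq3.8}. The only difference is that you take a detour through Proposition~\ref{prop3.1}, splitting $\xi=\Lambda_x\xi+\Lambda_x^\perp\xi$ and killing the tangential piece twice. The paper bypasses this entirely: since $\xi$ is fixed and $\Lambda_x+\Lambda_x^\perp=\mathrm{Id}$, one has directly $\partial_X(\Lambda_\cdot\xi)=-\partial_X(\Lambda_\cdot^\perp\xi)$, and then \eqref{eq3.8} applies immediately. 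So what you flag as ``the main (though modest) obstacle'' --- the careful use of Proposition~\ref{prop3.1} --- is in fact not needed at all here.
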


\begin{proof} By \eqref{eq3.3}, $\dis (\nabla_vB)(x)=(dJ(x))^*\bigl(\partial_v (\Lambda_\cdot \xi)\bigr)$ which is equal to
$-(dJ(x))^*\bigl(\partial_v (\Lambda_\cdot^\perp \xi)\bigr)$ as $\Lambda_x\xi+\Lambda_x^\perp=\xi$. Now definition \eqref{eq3.8} gives
the result.
\end{proof}
\vskip 2mm

For the sake of self-contained, we use \eqref{eq3.10} to check properties \eqref{eq2.2}, \eqref{eq2.3} and  \eqref{eq2.7}. 
Let ${\mathcal B}=\{e_1, \ldots, e_N\}$ be an orthonormal basis of $\R^N$, we define
\begin{equation*}
A_i(x)=(dJ(x))^*e_i,\quad i=1, \ldots, N.
\end{equation*}

\begin{proposition}\label{prop3.3} We have
\begin{equation*}
\sum_{i=1}^N \nabla_{A_i}A_i =0.
\end{equation*}
\end{proposition}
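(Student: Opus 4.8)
The plan is to compute $\sum_{i=1}^N \nabla_{A_i} A_i$ using the explicit formula \eqref{eq3.10} from Proposition \ref{prop3.2}. Since $A_i$ is of the form $A_i(x) = (dJ(x))^* e_i$ with $e_i$ a \emph{constant} vector in $\R^N$, Proposition \ref{prop3.2} applies directly with $\xi = e_i$, giving
\begin{equation*}
\nabla_{A_i} A_i = \A(A_i, \Lambda_\cdot^\perp e_i).
\end{equation*}
Then using the defining relation \eqref{eq3.9} of the shape operator, namely $\langle \A(X,V), Y\rangle = \langle \alpha(X,Y), V\rangle$, I would pair $\sum_i \nabla_{A_i} A_i$ against an arbitrary $Y \in \X(M)$ and reduce the claim to showing $\sum_{i=1}^N \langle \alpha(A_i, Y), \Lambda_x^\perp e_i\rangle = 0$ for all $x$ and all $Y$.

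The key observation driving the computation is that $\{e_1, \dots, e_N\}$ is an orthonormal basis, so $\sum_{i=1}^N \langle \zeta, e_i\rangle\, e_i = \zeta$ for any $\zeta \in \R^N$, and more usefully $\sum_i \langle \zeta_1, e_i\rangle \langle \zeta_2, e_i \rangle = \langle \zeta_1, \zeta_2\rangle$. Writing $A_i(x) = (dJ(x))^* e_i$ and unwinding $\alpha_x(A_i, Y) = d\Lambda_x(A_i)\cdot dJ(x) Y_x$, the dependence on $i$ enters only through $A_i$ (linearly, via $dJ(x)^* e_i$) and through $\Lambda_x^\perp e_i$. So $\sum_i \langle \alpha_x(A_i, Y), \Lambda_x^\perp e_i\rangle$ becomes a sum of the form $\sum_i \langle M(dJ(x)^* e_i), \Lambda_x^\perp e_i \rangle$ for an appropriate linear map $M = M_{x,Y}$ built from $d\Lambda_x$ and $dJ(x)$. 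Using $dJ(x)^* e_i = dJ(x)^* \Lambda_x e_i$ (since $dJ(x)^* = dJ(x)^* \Lambda_x$ as $\Lambda_x$ is the projection onto $E_x$ and $dJ(x)$ maps into $E_x$), the sum splits via the basis into a trace-type expression involving $\Lambda_x$ composed with $\Lambda_x^\perp$, and since $\Lambda_x \Lambda_x^\perp = 0$ this vanishes. Equivalently, one can argue that $d\Lambda_x(v)$ sends $E_x$ to $E_x^\perp$ and vice versa (as established in the paragraph after Proposition \ref{prop3.1}), so the relevant inner products collapse against the projection factor.

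Concretely, I would carry out the steps in this order: (1) invoke Proposition \ref{prop3.2} to write $\nabla_{A_i} A_i = \A(A_i, \Lambda_\cdot^\perp e_i)$; (2) test against arbitrary $Y$ and apply \eqref{eq3.9} to get $\langle \sum_i \nabla_{A_i} A_i, Y\rangle = \sum_i \langle \alpha(A_i, Y), \Lambda_\cdot^\perp e_i\rangle$; (3) expand $\alpha_x(A_i, Y) = d\Lambda_x(A_i)\, dJ(x) Y_x$ and, using symmetry of $\alpha$, rewrite as $d\Lambda_x(Y)\, dJ(x) A_i = d\Lambda_x(Y)\, \Lambda_x e_i$ (using $dJ(x)(dJ(x))^* = \Lambda_x$); (4) sum over $i$ with the orthonormality identity $\sum_i \langle d\Lambda_x(Y)\Lambda_x e_i, \Lambda_x^\perp e_i\rangle = \mathrm{Trace}(\Lambda_x^\perp \circ d\Lambda_x(Y) \circ \Lambda_x)$; (5) conclude this trace is zero because $d\Lambda_x(Y)$ maps $E_x = \mathrm{Im}\,\Lambda_x$ into $E_x^\perp = \ker \Lambda_x^\perp$... wait, that gives the wrong composition — actually $d\Lambda_x(Y)\Lambda_x$ lands in $E_x^\perp$ so $\Lambda_x^\perp$ acts as identity, and one instead uses that $\mathrm{Trace}(\Lambda_x^\perp \circ d\Lambda_x(Y) \circ \Lambda_x) = \mathrm{Trace}(d\Lambda_x(Y) \circ \Lambda_x \circ \Lambda_x^\perp) = 0$ by cyclicity of trace and $\Lambda_x\Lambda_x^\perp = 0$. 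Since $Y$ is arbitrary, the vector field $\sum_i \nabla_{A_i} A_i$ vanishes.

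The main obstacle is bookkeeping rather than conceptual: one must be careful about which arguments of $\alpha$ and $d\Lambda_x(\cdot)$ are the ``direction'' slot versus the ``vector'' slot, and must correctly track that $A_i(x) = dJ(x)^* e_i$ is a tangent vector at $x$ while $e_i$ and $\Lambda_x^\perp e_i$ are ambient vectors in $\R^N$. The crucial facts that make everything collapse are (a) the orthonormality of $\{e_i\}$ converting a sum into a trace, (b) cyclicity of the trace, and (c) $\Lambda_x \Lambda_x^\perp = \Lambda_x^\perp \Lambda_x = 0$. Alternatively — and perhaps more cleanly for the write-up — one can observe that $\sum_{i=1}^N A_i(x) \otimes e_i$ is, up to identification, essentially $(dJ(x))^*$ itself, reducing the whole identity to a pointwise statement about $\partial_X$ of the constant-in-$\R^N$ tensor $\sum_i e_i \otimes e_i = \mathrm{Id}_{\R^N}$, whose derivative is zero; this is likely the shortest route and I would check whether it yields the cleanest proof.
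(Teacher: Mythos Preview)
Your argument is correct and shares its opening with the paper: both invoke Proposition~\ref{prop3.2} to write $\nabla_{A_i}A_i=\A(A_i,\Lambda_\cdot^\perp e_i)$, then pair against an arbitrary tangent vector and apply \eqref{eq3.9}. The divergence is in the last step. The paper first simplifies $\langle\alpha_x(A_i,u),\Lambda_x^\perp e_i\rangle$ to $\langle\alpha_x(A_i,u),e_i\rangle$ (since $\alpha$ takes values in $E_x^\perp$), observes that the resulting sum is basis-independent, and then chooses at each $x$ an orthonormal basis adapted to $\R^N=E_x\oplus E_x^\perp$: for $i\le n$ one has $e_i\in E_x$ so $\langle\alpha_x(\cdot,\cdot),e_i\rangle=0$, while for $i>n$ one has $A_i=(dJ(x))^*e_i=0$; every term vanishes individually. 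You instead keep the basis arbitrary, use the symmetry of $\alpha$ to rewrite $\alpha_x(A_i,Y)=d\Lambda_x(Y)\,\Lambda_x e_i$, recognize the sum as $\mathrm{Trace}\bigl(\Lambda_x^\perp\circ d\Lambda_x(Y)\circ\Lambda_x\bigr)$, and kill it via cyclicity and $\Lambda_x\Lambda_x^\perp=0$. The paper's adapted-basis trick is shorter and more direct (and sidesteps the momentary misidentification $E_x^\perp=\ker\Lambda_x^\perp$ that you flag and correct in your step~(5)); your trace argument is genuinely basis-free and makes the algebraic mechanism $\Lambda\Lambda^\perp=0$ explicit. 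Either is acceptable for the write-up.
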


\begin{proof} Let $u\in T_xM$, then by \eqref{eq3.10} and \eqref{eq3.9} respectively, 
\begin{equation*}
\langle (\nabla_{A_i}A_i)(x), u\rangle=\langle \A(A_i, \Lambda_\cdot^\perp\xi_i), u\rangle
=\langle \alpha_x((dJ(x))^*e_i, u), e_i\rangle.
\end{equation*}
The sum from $i=1$ to $N$ of above terms is basis independent. Then taking $\{e_1, \ldots, e_n\}\subset E_x$
and $\{e_{n+1}, \ldots, e_N\}\subset E_x^\perp$, and remarking $(dJ(x))^*e_j=0$ for $j>n$, we have

\begin{equation*}
\langle \sum_{i=1}^N(\nabla_{A_i}A_i)(x), u\rangle=\sum_{i=1}^n \langle \alpha_x((dJ(x))^*e_i, u), e_i\rangle=0,
\end{equation*}
due to the orthogonality. The result follows.
\end{proof}

\begin{proposition}\label{prop3.4} We have
\begin{equation*}
\sum_{i=1}^N  A_i\wedge\nabla_V A_i=0,\quad V\in\X(M).
\end{equation*}
\end{proposition}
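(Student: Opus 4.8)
The plan is to mimic the strategy of Proposition~\ref{prop3.3}: express the wedge $A_i\wedge\nabla_VA_i$ in terms of the second fundamental form using \eqref{eq3.10} and \eqref{eq3.9}, then exploit a basis-independent rewriting together with the symmetry of $\alpha$. Concretely, fix $x\in M$ and $a,b\in T_xM$, and compute the pairing
\begin{equation*}
\sum_{i=1}^N \langle A_i\wedge\nabla_VA_i,\, a\wedge b\rangle
=\sum_{i=1}^N\Bigl(\langle A_i,a\rangle\langle\nabla_VA_i,b\rangle-\langle A_i,b\rangle\langle\nabla_VA_i,a\rangle\Bigr).
\end{equation*}
By Proposition~\ref{prop3.2}, $\nabla_VA_i=\A(V,\Lambda_\cdot^\perp e_i)$, so by \eqref{eq3.9} we have $\langle\nabla_VA_i,b\rangle=\langle\alpha_x(V_x,b),\Lambda_x^\perp e_i\rangle=\langle\alpha_x(V_x,b),e_i\rangle$, the last step because $\alpha_x(V_x,b)\in E_x^\perp$ so the projection $\Lambda_x^\perp$ is harmless. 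Also $\langle A_i,a\rangle=\langle (dJ(x))^*e_i,a\rangle=\langle e_i,dJ(x)a\rangle$.

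Substituting, the $i$-th summand becomes
\begin{equation*}
\langle e_i, dJ(x)a\rangle\langle e_i,\alpha_x(V_x,b)\rangle-\langle e_i,dJ(x)b\rangle\langle e_i,\alpha_x(V_x,a)\rangle,
\end{equation*}
and summing over the orthonormal basis $\{e_1,\dots,e_N\}$ collapses each product of inner products into a single $\R^N$ inner product:
\begin{equation*}
\sum_{i=1}^N\langle A_i\wedge\nabla_VA_i,a\wedge b\rangle
=\langle dJ(x)a,\alpha_x(V_x,b)\rangle-\langle dJ(x)b,\alpha_x(V_x,a)\rangle.
\end{equation*}
Now $dJ(x)a\in E_x$ while $\alpha_x(V_x,b)\in E_x^\perp$ (Proposition~\ref{prop3.1}), so both terms vanish identically, and since $a\wedge b$ is arbitrary, $\sum_{i=1}^N A_i\wedge\nabla_VA_i=0$.

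I would present the basis-independence remark carefully: the quantity $\sum_i A_i\wedge\nabla_VA_i$ does not depend on the chosen orthonormal basis of $\R^N$ (changing basis is an orthogonal transformation that leaves the sum $\sum_i\langle\cdot,e_i\rangle\langle\cdot,e_i\rangle$ invariant), so the computation above is valid for \emph{every} basis. The only genuinely delicate point — and the one I would state explicitly — is the identification $\langle\alpha_x(V_x,b),\Lambda_x^\perp e_i\rangle=\langle\alpha_x(V_x,b),e_i\rangle$, which relies on $\alpha$ taking values in the normal bundle; everything else is the same bookkeeping already used for Proposition~\ref{prop3.3}. Note in passing that the symmetry of $\alpha$ is not actually needed here: the vanishing comes purely from the orthogonality $E_x\perp E_x^\perp$, so $V$ may be an arbitrary vector field.
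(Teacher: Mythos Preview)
Your argument is correct and follows the paper's proof essentially step for step: expand the wedge pairing against $a\wedge b$, convert $\nabla_VA_i$ to the shape operator via \eqref{eq3.10}, pass to $\alpha$ via \eqref{eq3.9}, sum over the orthonormal basis, and invoke the orthogonality $E_x\perp E_x^\perp$. Your added remarks (making explicit that $\langle\alpha_x(V_x,b),\Lambda_x^\perp e_i\rangle=\langle\alpha_x(V_x,b),e_i\rangle$, and noting that the symmetry of $\alpha$ is not used) are welcome clarifications but do not change the route.
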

\begin{proof}
Let $a,b\in T_xM$, we have
\begin{equation*}
\langle A_i\wedge\nabla_VA_i, a\wedge b\rangle=\langle A_i, a\rangle \langle\nabla_V A_i, b\rangle
-\langle A_i, b\rangle\langle \nabla_VA_i, a\rangle,
\end{equation*}
which is equal to, by \eqref{eq3.10},
\begin{equation*}
\langle A_i, a\rangle \langle\A(V_x, \Lambda_x^\perp e_i), b\rangle
-\langle A_i, b\rangle \langle\A(V_x, \Lambda_x^\perp e_i), a\rangle,
\end{equation*}

which is equal to, by \eqref{eq3.9},
\begin{equation*}
\langle e_i, dJ(x)a\rangle \langle\alpha(V_x, b), e_i\rangle
-\langle e_i, dJ(x)b\rangle \langle\alpha(V_x, a), e_i\rangle.
\end{equation*}

Therefore 
\begin{equation*}
\sum_{i=1}^N \langle A_i\wedge\nabla_VA_i, a\wedge b\rangle
= \langle \alpha(V_x,b), dJ(x)a\rangle- \langle \alpha(V_x,a), dJ(x)b\rangle=0.
\end{equation*}
\end{proof}

\begin{proposition}\label{prop3.5} Let $B(x)=(dJ(x))^*\xi$ for a fixed $\xi\in\R^N$. Then
\begin{equation}\label{eq3.11}
\div(B)=\langle {\rm Trace}(\alpha), \xi\rangle_{\R^N}.
\end{equation}
\end{proposition}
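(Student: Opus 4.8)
The plan is to compute the divergence of $B$ directly from the definition $\div(B)(x)=\sum_{i}\langle\nabla_{v_i}B,v_i\rangle_{T_xM}$ for an orthonormal basis $\{v_1,\dots,v_n\}$ of $T_xM$, using the representation of $\nabla B$ supplied by Proposition~\ref{prop3.2}. First I would fix $x\in M$ and apply \eqref{eq3.10} to write $\nabla_{v_k}B=\A(v_k,\Lambda_x^\perp\xi)$, then pair with $v_k$ and invoke \eqref{eq3.9} to turn the shape operator back into the second fundamental form:
\begin{equation*}
\langle\nabla_{v_k}B,v_k\rangle_{T_xM}=\langle\A(v_k,\Lambda_x^\perp\xi),v_k\rangle=\langle\alpha_x(v_k,v_k),\Lambda_x^\perp\xi\rangle_{\R^N}.
\end{equation*}
Summing over $k$ gives $\div(B)(x)=\langle\sum_{k=1}^n\alpha_x(v_k,v_k),\Lambda_x^\perp\xi\rangle_{\R^N}=\langle\operatorname{Trace}(\alpha)(x),\Lambda_x^\perp\xi\rangle_{\R^N}$, where $\operatorname{Trace}(\alpha)(x):=\sum_{k}\alpha_x(v_k,v_k)$ is the (basis-independent) mean-curvature vector, which lies in $E_x^\perp$.

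The remaining step is to replace $\Lambda_x^\perp\xi$ by $\xi$ inside the inner product. This is legitimate precisely because $\operatorname{Trace}(\alpha)(x)\in E_x^\perp$ and $\Lambda_x^\perp$ is the orthogonal projection of $\R^N$ onto $E_x^\perp$: for any $\eta\in E_x^\perp$ one has $\langle\eta,\xi\rangle_{\R^N}=\langle\eta,\Lambda_x\xi+\Lambda_x^\perp\xi\rangle_{\R^N}=\langle\eta,\Lambda_x^\perp\xi\rangle_{\R^N}$ since $\langle\eta,\Lambda_x\xi\rangle=0$. Applying this with $\eta=\operatorname{Trace}(\alpha)(x)$ yields \eqref{eq3.11}.

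The only genuine point requiring care — hardly an obstacle — is the assertion that $\operatorname{Trace}(\alpha)(x)$ is well defined independently of the chosen orthonormal basis and takes values in $E_x^\perp$; the first follows from bilinearity and symmetry of $\alpha$ established earlier in the section, and the second from Proposition~\ref{prop3.1}, which was already used to show $\alpha_x(u,v)\in E_x^\perp$. Once these are in hand the computation is a two-line identification, so I would present it compactly.
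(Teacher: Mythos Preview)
Your argument is correct and follows exactly the paper's route: compute $\div(B)=\sum_i\langle\A(v_i,\Lambda_x^\perp\xi),v_i\rangle$ via \eqref{eq3.10}, convert to $\langle\alpha(v_i,v_i),\cdot\rangle$ via \eqref{eq3.9}, and then sum. The only difference is cosmetic: the paper compresses the passage from $\Lambda_x^\perp\xi$ to $\xi$ into a single equality, whereas you spell out the orthogonality justification explicitly.
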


\begin{proof} Let $\{v_1, \ldots, v_n\}$ be an orthonormal basis of $T_xM$; by \eqref{eq3.10} and \eqref{eq3.9},
\begin{equation*}
\div(B)=\sum_{i=1}^n \langle \A(v_i, \Lambda_x^\perp\xi), v_i\rangle=\sum_{i=1}^n \langle \alpha(v_i, v_i), \xi\rangle
=\langle {\rm Trace}(\alpha), \xi\rangle_{\R^N}.
\end{equation*}
\end{proof}

\begin{theorem}\label{th3.1} Let ${\mathcal B}=\{e_1, \ldots, e_N\}$ be an orthonormal basis of $\R^N$, and
$A_i(x)=(dJ(x))^*e_i$. Define
\begin{equation}\label{eq3.12}
\T_1(B)=\sum_{i=1}^N \div(A_i)\L_{A_i}B.
\end{equation}
Then 
\begin{equation}\label{eq3.13}
\T_1(B)=-\A(B, {\rm Trace}(\alpha)).
\end{equation}
\end{theorem}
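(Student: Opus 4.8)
The plan is to evaluate $\T_1(B)$ by first reducing the Lie derivative $\L_{A_i}B$ to a purely connection-theoretic expression, exactly as in the proof of Proposition~\ref{prop2.1}. Indeed, by torsion-freeness $\L_{A_i}B = [A_i,B] = \nabla_{A_i}B - \nabla_B A_i$, and since the family $\{A_i\}$ satisfies \eqref{eq2.7} (which in turn follows from Propositions~\ref{prop3.3} and~\ref{prop3.4} via \eqref{eq2.2}--\eqref{eq2.3}), the term $\sum_i \div(A_i)\nabla_{A_i}B$ drops out. Hence
\begin{equation*}
\T_1(B) = \sum_{i=1}^N \div(A_i)\,\L_{A_i}B = -\sum_{i=1}^N \div(A_i)\,\nabla_B A_i.
\end{equation*}
So everything reduces to computing $\sum_i \div(A_i)\nabla_B A_i$ in terms of the geometric data of the embedding.

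Next I would bring in the two key formulas from Section~\ref{sect3}. Since $A_i(x) = (dJ(x))^*e_i$, Proposition~\ref{prop3.5} gives $\div(A_i) = \langle \mathrm{Trace}(\alpha), e_i\rangle_{\R^N}$, and Proposition~\ref{prop3.2} gives $\nabla_B A_i = \A(B, \Lambda_\cdot^\perp e_i)$. Substituting,
\begin{equation*}
\T_1(B) = -\sum_{i=1}^N \langle \mathrm{Trace}(\alpha), e_i\rangle\, \A(B, \Lambda_\cdot^\perp e_i).
\end{equation*}
Now the idea is to exploit the bilinearity of $\A$ in its second slot (noted after \eqref{eq3.9}) together with the fact that $\mathcal B$ is an orthonormal basis of $\R^N$: the sum $\sum_i \langle \eta, e_i\rangle e_i$ is just $\eta$ for any $\eta\in\R^N$. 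Applying this with $\eta = \mathrm{Trace}(\alpha)$ and using linearity of $\Lambda^\perp$ and of $\A(B,\cdot)$, we get $\sum_i \langle\mathrm{Trace}(\alpha),e_i\rangle \A(B,\Lambda^\perp_\cdot e_i) = \A(B, \Lambda_\cdot^\perp(\mathrm{Trace}(\alpha)))$.

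The final point to settle is that $\Lambda_\cdot^\perp(\mathrm{Trace}(\alpha)) = \mathrm{Trace}(\alpha)$, i.e. that the mean curvature vector is already normal to $M$. This is immediate from the definition \eqref{eq3.4}: each $\alpha_x(v,v) = d\Lambda_x(v)\cdot dJ(x)v \in E_x^\perp$ by Proposition~\ref{prop3.1} (as observed right after \eqref{eq3.4}), so $\mathrm{Trace}(\alpha) = \sum_j \alpha_x(v_j,v_j) \in E_x^\perp$ and is fixed by $\Lambda_x^\perp$. This yields $\T_1(B) = -\A(B, \mathrm{Trace}(\alpha))$, which is \eqref{eq3.13}. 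I do not anticipate a genuine obstacle here; the only thing to be careful about is the bookkeeping of which slot of $\A$ is linear and the basis-independence argument — one should check that $\A(B,\Lambda^\perp_\cdot e_i)$ depends on $e_i$ only through its normal component, which is exactly what lets the orthonormal-basis resummation go through cleanly. One could equivalently argue directly via \eqref{eq3.9}: pairing against an arbitrary $u\in T_xM$, $\langle \T_1(B),u\rangle = -\sum_i \langle\mathrm{Trace}(\alpha),e_i\rangle\langle\alpha(B,u),e_i\rangle = -\langle\alpha(B,u),\mathrm{Trace}(\alpha)\rangle = -\langle\A(B,\mathrm{Trace}(\alpha)),u\rangle$, which avoids any discussion of $\Lambda^\perp$ altogether and is perhaps the cleanest route.
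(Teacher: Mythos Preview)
Your proof is correct and follows essentially the same route as the paper: reduce $\T_1(B)$ to $-\sum_i \div(A_i)\,\nabla_B A_i$ via $\sum_i \div(A_i)A_i=0$, then substitute $\div(A_i)=\langle\mathrm{Trace}(\alpha),e_i\rangle$ and $\nabla_B A_i=\A(B,\Lambda_\cdot^\perp e_i)$ and sum over the orthonormal basis. The only cosmetic difference is that the paper derives $\sum_i \div(A_i)A_i=0$ directly inside the proof from \eqref{eq3.11} (rather than quoting \eqref{eq2.7}), and it suppresses the observation $\Lambda^\perp(\mathrm{Trace}(\alpha))=\mathrm{Trace}(\alpha)$ that you spell out.
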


\begin{proof} We first see, by \eqref{eq3.11}, that
\begin{equation*}
\sum_{i=1}^N \div(A_i)(x)A_i(x)=\sum_{i=1}^N \langle {\rm Trace}(\alpha_x), e_i\rangle(dJ(x))^*e_i
=(dJ(x))^*({\rm Trace}(\alpha_x))=0,
\end{equation*}
since ${\rm Trace}(\alpha_x)\in E_x^\perp$. Therefore term \eqref{eq3.12} becomes 
$\dis \T_1(B)=\sum_{i=1}^N \div(A_i)\nabla_BA_i$. Again using \eqref{eq3.10}, 
\begin{equation*}
\T_1(B) = -\sum_{i=1}^N \langle {\rm Trace}(\alpha), e_i\rangle\A(B, \Lambda_\cdot^\perp e_i)=-\A(B, {\rm Trace}(\alpha)).
\end{equation*}

\end{proof}
{\it Remark:} In the case of $\S^n$, for $x\in \S^n$, the outer normal unit vector ${\bf n}(x)=x$; then for $v\in T_xM$ and 
$\gamma(t)=x\cos(t)+v\sin(t)$, we see that $\dis \frac{d}{dt}_{|_{t=0}}{\bf n}(\gamma(t))=v\in T_xM$. Therefore 
\begin{equation*}
\A(v,{\bf n})=v.
\end{equation*}
Using \eqref{eq3.13} gives $\dis \T_1(B)= -nB$.

\vskip 2mm
In what follows, we will give a link between the Ricci tensor and $\T_1$. Let $R(B,Y)Z=[\nabla_B, \nabla_Y]Z-\nabla_{[B,Y]}Z$ be the
curvature tensor on $M$. Using second fundamental form, $R$ can be expressed by (see \cite{KN}):
\begin{equation}\label{eq3.14}
\langle R(B,Y)Z, W\rangle=\langle \alpha(Y,Z), \alpha(B, W)\rangle - \langle \alpha(B,Z), \alpha(Y, W)\rangle.
\end{equation}

Let $A_i$ be defined as above. Define
\begin{equation*}
\psi(B,W)=\sum_{i=1}^N \langle \alpha(B,A_i), \alpha(W,A_i)\rangle,
\end{equation*}
which is basis independent, symmetric bilinear form. Using \eqref{eq3.9}, we get
\begin{equation*}
\psi(B,W)=\sum_{i=1}^N \langle \A(B,\Lambda_\cdot^\perp e_i), \A(W,\Lambda_\cdot^\perp e_i)\rangle.
\end{equation*}

This quantity is independent of the dimension of $M$, but the co-dimension of $M$ in $\R^N$.  Define the tensor $\T_2$ by
\begin{equation*}
\langle \T_2(B), W\rangle=\psi(B, W),\quad W\in\X(M).
\end{equation*}
The tensor $\T_2$ is directly related to the manner of the embedding $M$ into $\R^N$.

\begin{proposition}\label{prop3.7} It holds true
\begin{equation}\label{eq3.15}
\T_1=- \Ric -\T_2.
\end{equation}
\end{proposition}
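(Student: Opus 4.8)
The plan is to prove \eqref{eq3.15} by pairing both sides with an arbitrary vector field $W$ and verifying the resulting scalar identity pointwise. All the needed ingredients are already in place: the expression of $\T_1$ through the shape operator (Theorem \ref{th3.1}), the Gauss equation \eqref{eq3.14}, the duality \eqref{eq3.9} between the second fundamental form $\alpha$ and the shape operator $\A$, and the interpretation of $\psi$ via Proposition \ref{prop3.5} and the basis-independence of $\psi$.

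First I would rewrite $\T_1$. By \eqref{eq3.13}, $\T_1(B)=-\A\bigl(B,{\rm Trace}(\alpha)\bigr)$, where ${\rm Trace}(\alpha)=\sum_{j=1}^n\alpha(v_j,v_j)\in E_x^\perp$ for any orthonormal basis $\{v_1,\dots,v_n\}$ of $T_xM$ (this is the normal vector appearing in Proposition \ref{prop3.5}). Pairing with $W\in\X(M)$ and applying \eqref{eq3.9} with $X=B$, $Y=W$ and $V={\rm Trace}(\alpha)$ gives
\begin{equation*}
\langle \T_1(B), W\rangle=-\langle \A(B,{\rm Trace}(\alpha)),W\rangle=-\langle \alpha(B,W),{\rm Trace}(\alpha)\rangle .
\end{equation*}

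Next I would bring in the curvature. Using the sign convention for which $\Ric=(n-1){\rm Id}$ on $\S^n$, i.e. $\Ric(B,W)=\sum_{j=1}^n\langle R(B,v_j)v_j,W\rangle$ with $R$ as defined just before \eqref{eq3.14}, I substitute $Y=Z=v_j$ into \eqref{eq3.14} and sum over $j$. The first family of terms recombines as $\bigl\langle\sum_j\alpha(v_j,v_j),\alpha(B,W)\bigr\rangle=\langle{\rm Trace}(\alpha),\alpha(B,W)\rangle$. The second family is $\sum_j\langle\alpha(B,v_j),\alpha(v_j,W)\rangle$; choosing the orthonormal basis $\{e_1,\dots,e_N\}$ of $\R^N$ adapted to $x$, so that $e_1,\dots,e_n$ span $E_x$ and $e_{n+1},\dots,e_N$ span $E_x^\perp$, one has $A_i(x)=v_i$ for $i\le n$ and $A_i(x)=0$ for $i>n$ (as in the proof of Proposition \ref{prop3.3}), so this sum equals $\psi(B,W)=\langle\T_2(B),W\rangle$, which is legitimate since $\psi$ is basis independent. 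Hence
\begin{equation*}
\Ric(B,W)=\langle{\rm Trace}(\alpha),\alpha(B,W)\rangle-\langle\T_2(B),W\rangle .
\end{equation*}

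Combining the two displays gives $\langle\T_1(B),W\rangle=-\langle{\rm Trace}(\alpha),\alpha(B,W)\rangle=-\langle\Ric(B),W\rangle-\langle\T_2(B),W\rangle$, and since $W\in\X(M)$ is arbitrary this is exactly \eqref{eq3.15}. There is no serious obstacle; the one point needing care is the bookkeeping of conventions — one must use the contraction of the curvature tensor defining $\Ric$ that matches the slot placement in the Gauss equation \eqref{eq3.14}, and view $\Ric$ as the endomorphism $B\mapsto\Ric(B)$ so that $\langle\Ric(B),W\rangle$ is the symmetric form $\Ric(B,W)$. As a consistency check on $\S^n$: \eqref{eq3.13} gives $\T_1(B)=-nB$ (cf. \eqref{eq2.13}), while $\Ric(B)=(n-1)B$ and, since $\alpha_x(u,v)=-\langle u,v\rangle x$ there, $\T_2(B)=B$, so indeed $-\Ric(B)-\T_2(B)=-nB$.
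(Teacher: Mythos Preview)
Your proof is correct and follows essentially the same approach as the paper: pair with an arbitrary $W$, expand $\langle\Ric(B),W\rangle$ as a trace of the curvature tensor, apply the Gauss equation \eqref{eq3.14}, and identify the two resulting sums with $-\langle\T_1(B),W\rangle$ (via \eqref{eq3.9} and \eqref{eq3.13}) and $\langle\T_2(B),W\rangle$ respectively. The only cosmetic difference is that the paper takes the trace directly over the $A_i$'s (writing $\langle\Ric(B),W\rangle=\sum_{i=1}^N\langle R(B,A_i)A_i,W\rangle$, which is legitimate by \eqref{eq2.1}), whereas you trace over an orthonormal basis $\{v_j\}$ of $T_xM$ and then pass to an adapted basis to match the definition of $\psi$.
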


\begin{proof} We have $\dis \langle \Ric(B), W\rangle=\sum_{i=1}^N\langle R(B, A_i)A_i, W\rangle$. The relation \eqref{eq3.14} 
leads $\dis \langle \Ric(B), W\rangle$ to
\begin{equation*}
\begin{split}
&\sum_{i=1}^N \langle \alpha(A_i, A_i), \alpha(B,W)\rangle-\sum_{i=1}^N \langle \alpha(B, A_i), \alpha(W,A_i)\rangle\\
&=\langle \A(B, {\rm Trace}(\alpha)), W\rangle-\psi(B,W)=-\langle \T_1(B), W\rangle-\langle \T_2(B), W\rangle.
\end{split}
\end{equation*}
\end{proof}
The result \eqref{eq3.15} follows.

\begin{theorem}\label{th3.8} Let $B$ be a vector field on $M$ of divergence free, then
\begin{equation}\label{eq3.16}
\sum_{i=1}^N \L_{A_i}^2B=-\hat\square B  + 2 \T_2(B).
\end{equation}
\end{theorem}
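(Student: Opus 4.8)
The plan is to combine the two already-established structural identities: Proposition~\ref{prop2.1} (in the form \eqref{eq2.6}, valid for the embedding-induced frame by Propositions~\ref{prop3.3} and~\ref{prop3.4}), which gives $\sum_{i=1}^N \L_{A_i}^2 B = -\square B - 2\T_1(B)$, and the relation \eqref{eq3.15}, $\T_1 = -\Ric - \T_2$. Substituting the latter into the former yields $\sum_{i=1}^N \L_{A_i}^2 B = -\square B + 2\Ric(B) + 2\T_2(B)$. The remaining task is purely to rewrite $-\square B + 2\Ric(B)$ in terms of $\hat\square B$ under the hypothesis $\div(B)=0$.

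For that I would invoke the two Weitzenb\"ock-type formulae already recalled in the introduction. Bochner--Weitzenb\"ock \eqref{eq1.4} gives $\square B = -\Delta B + \Ric(B)$, hence $-\square B + 2\Ric(B) = \Delta B + \Ric(B)$. On the other hand \eqref{eq1.3} gives $\hat\square B = -\Delta B - \Ric(B) - \nabla\div(B)$, and since $\div(B)=0$ the last term drops, so $\Delta B + \Ric(B) = -\hat\square B$. Therefore $-\square B + 2\Ric(B) = -\hat\square B$, and plugging this into the displayed substitution gives exactly \eqref{eq3.16}.

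Concretely the proof is three or four lines: first cite \eqref{eq2.6} together with Propositions~\ref{prop3.3}--\ref{prop3.4} to justify that the embedding frame $\{A_i\}$ satisfies the standing hypotheses \eqref{eq2.1}--\eqref{eq2.3} (note \eqref{eq2.1} is the polarized isometry condition, which holds since $\sum_i \langle (dJ)^*e_i, v\rangle^2 = \sum_i \langle e_i, dJ(x)v\rangle^2 = |dJ(x)v|^2 = |v|^2$), then insert \eqref{eq3.15}, then apply \eqref{eq1.3} and \eqref{eq1.4} with $\div(B)=0$ to collapse $-\square + 2\Ric$ into $-\hat\square$. I do not expect any genuine obstacle here: everything needed is assembled in the earlier sections, and the only point requiring a word of care is that the divergence-free hypothesis is exactly what kills the $\nabla\div(B)$ term in \eqref{eq1.3} so that the two Laplacians line up; without it the identity would carry an extra $\nabla\div(B)$. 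One could optionally remark that on $\S^n$, where $\T_2(B) = B$ by the computation \eqref{eq2.14} combined with \eqref{eq3.16} (or directly from $\psi$ and $\A(v,{\bf n})=v$), this recovers the earlier Proposition~\ref{prop2.2}.
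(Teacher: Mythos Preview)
Your proof is correct and follows essentially the same route as the paper: the paper's proof is the single line ``Equality \eqref{eq3.16} follows from \eqref{eq2.6} and \eqref{eq3.15},'' and you have simply made explicit the intermediate step the paper leaves implicit, namely that \eqref{eq1.3} and \eqref{eq1.4} together give $-\square B + 2\Ric(B) = -\hat\square B$ when $\div(B)=0$. Your added remarks on why the embedding frame satisfies \eqref{eq2.1}--\eqref{eq2.3} and on the role of the divergence-free hypothesis are accurate and helpful, though not strictly needed beyond what the paper already records.
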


\begin{proof} Equality \eqref{eq3.16} follows from \eqref{eq2.6} and \eqref{eq3.15}.
\end{proof}

\section{Probabilistic representation formula for the Navier-Stokes equation}

Let's first state Constantin-Iyer's probabilistic representation formula for Navier-Stokes equation \eqref{NSE} on ${\mathbb T}^n$.

\begin{theorem}[Constantin--Iyer]\label{thCI}
Let $\nu>0$, $W$ be an $n$-dimensional Wiener process, $k\geq 1$, and $u_0\in C^{k+1,\alpha}$ a given deterministic divergence-free vector field.
Let the pair $(X,u)$ satisfy the stochastic system
  \begin{equation}\label{eq4.1}
  \begin{cases}
  \d X_t=\sqrt{2\nu}\,\d W_t+u_t(X_t)\,\d t,\\
  u_t=\E\mathbf{P}\big[\big(\nabla X_t^{-1}\big)^\ast \big(u_0\circ X_t^{-1}\big)\big],
  \end{cases}
  \end{equation}
where $\mathbf{P}$ is the Leray--Hodge projection and the star $\ast$ denotes the transposed matrix.
Then $u$ satisfies the incompressible Navier--Stokes equations \eqref{NSE}.
\end{theorem}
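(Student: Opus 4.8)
The plan is to reduce the claimed identity to a direct verification that the velocity $u$ defined by the second line of \eqref{eq4.1} solves \eqref{NSE}, by applying It\^o's formula to the expression inside the expectation and exploiting the algebraic structure of the pull-back operator $(\nabla X_t^{-1})^\ast(u_0\circ X_t^{-1})$, which is precisely the push-forward of the $1$-form $u_0^\flat$ by the stochastic flow $X_t$. First I would recall the Weber formula / Kelvin circulation structure: writing $\ell_t = (\nabla X_t^{-1})^\ast(u_0\circ X_t^{-1})$, the key point is that along the flow $X_t$ generated by the first line of \eqref{eq4.1}, the process $\ell_t$ obeys a linear transport-type SPDE. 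I would compute $\d\ell_t$ using the stochastic chain rule for $X_t^{-1}$ (whose generator involves $-u_t\cdot\nabla$ plus the It\^o correction $\nu\Delta$ coming from the $\sqrt{2\nu}\,\d W_t$ term), obtaining $\d\ell_t = \big(\nu\Delta \ell_t - (u_t\cdot\nabla)\ell_t - (\nabla u_t)^\ast \ell_t\big)\,\d t + (\text{martingale})$, where the extra term $(\nabla u_t)^\ast\ell_t$ is exactly the contribution of the Jacobian factor $(\nabla X_t^{-1})^\ast$.

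Next I would take expectations to kill the martingale part: since $u_t = \E\mathbf{P}[\ell_t]$, and $\mathbf{P}$ is a bounded (deterministic) operator commuting with $\partial_t$, the field $v_t := \E[\ell_t]$ satisfies $\partial_t v_t = \nu\Delta v_t - (u_t\cdot\nabla)v_t - (\nabla u_t)^\ast v_t$ with $v_0 = u_0$. The decisive algebraic identity is that $-(u\cdot\nabla)v - (\nabla u)^\ast v = -(u\cdot\nabla)u - \nabla\big(\tfrac12|u|^2\big) + \big(\text{gradient terms}\big)$ once one uses that $u = \mathbf{P}v$, i.e.\ $v = u + \nabla q$ for some scalar $q$: substituting $v = u+\nabla q$ and using $(\nabla u)^\ast\nabla q$, $(u\cdot\nabla)\nabla q$ one checks that all the discrepancy between $\partial_t v$ and the Navier–Stokes right-hand side for $u$ collects into a pure gradient. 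Applying $\mathbf{P}$ to the equation for $v_t$ then annihilates every gradient term, and since $\mathbf{P}u = u$, $\mathbf{P}\Delta u = \Delta u$ (on the torus $\mathbf{P}$ commutes with $\Delta$ on divergence-free fields), one is left with $\partial_t u + \mathbf{P}\big[(u\cdot\nabla)u\big] - \nu\Delta u = 0$, which is \eqref{NSE} in Leray projected form; the pressure $p$ is recovered as the potential of the discarded gradient part.

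The main obstacle I expect is the careful justification of the stochastic calculus for the \emph{inverse} flow $X_t^{-1}$ and its gradient $\nabla X_t^{-1}$: one must differentiate the flow, invert, and apply It\^o's formula to a composition of random fields, keeping track of the It\^o corrections that produce the viscous Laplacian (a Stratonovich-to-It\^o type bookkeeping), and one needs enough regularity — this is where the hypothesis $u_0\in C^{k+1,\alpha}$ with $k\ge 1$ enters, guaranteeing a $C^{k,\alpha}$ stochastic flow of diffeomorphisms so that all the manipulations above are licit and the expectations are finite. A secondary technical point is verifying that $v_t=\E[\ell_t]$ is genuinely of the form $u_t+\nabla q_t$ with $q_t$ regular enough for the gradient terms to be legitimately absorbed into the pressure; this follows from the Hodge decomposition on ${\mathbb T}^n$ together with the regularity of $\ell_t$. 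Once these analytic points are in place, the proof is the chain of identities sketched above, and I would organize it so that the purely algebraic Weber-formula computation is separated from the stochastic-flow regularity lemmas.
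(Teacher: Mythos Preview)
The paper does not give its own proof of Theorem~\ref{thCI}; it is stated as the result of Constantin--Iyer \cite{Constantin} and then used as motivation for the manifold version, Theorem~\ref{th4.2}. Your proposal is essentially the original Constantin--Iyer argument and is correct in outline: the Weber-type quantity $\ell_t=(\nabla X_t^{-1})^\ast(u_0\circ X_t^{-1})$ satisfies a linear stochastic transport equation, taking expectations yields a deterministic PDE for $v_t=\E[\ell_t]$, and the identity $(u\cdot\nabla)v+(\nabla u)^\ast v=(u\cdot\nabla)u+\nabla(u\cdot v-\tfrac12|u|^2)$ (valid when $u=\mathbf P v$, i.e.\ $v=u+\nabla q$) lets the Leray projection reduce everything to \eqref{NSE}.

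It is worth contrasting this with the route the paper does take when proving the manifold analogue, Theorem~\ref{th4.2}. There the computation is carried out in \emph{weak} form: one fixes a divergence-free test field $v$, applies It\^o's formula to the pull-back $(X_t^{-1})_\ast v$ on the test-function side, and then uses the defining relation \eqref{eq4.6} itself (with $\tilde\square v$ and $\L_{u_s}v$ in place of $v$) to close the identity. This dual viewpoint avoids writing down the SPDE for $\ell_t$ and sidesteps the explicit Hodge decomposition $v_t=u_t+\nabla q_t$; the gradient terms are eliminated simply because one only ever integrates against divergence-free $v$. Your approach is more hands-on and closer to the original \cite{Constantin}; the paper's weak formulation is what makes the argument transplant cleanly to a general Riemannian manifold, where writing the strong SPDE for $\ell_t$ and manipulating $(\nabla u)^\ast v$ would require coordinate bookkeeping that the Lie-derivative language absorbs automatically.
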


For a given sufficiently regular initial velocity $u_0$, there is a $T>0$ such that system \eqref{eq4.1} admits a unique solution over 
$[0,T]$ and $X_t$ is a diffeomorphism of ${\mathbb T}^n$ (see \cite{Constantin}). Using the terminology of pull-back by diffeomorphism of vector fields, the following intrinsic formulation to the second identity in \eqref{eq4.1}  was given in \cite{FangLuo}:

  \begin{equation*}
  \int_{{\mathbb T^n}}\<u_t,v\>\,\d x=\E\bigg(\int_{{\mathbb T^n}}\big\<u_0,(X_{t}^{-1})_*v\big\>\,\d x\bigg),\quad \div(v)=0,\ \forall\, t\geq0,
  \end{equation*}
which means that the evolution of $u_t$ in the direction $v$ is equal to the average of the evolution of $v$ under the inverse flow $X_t^{-1}$ in the initial direction $u_0$. The generalization of Theorem \ref{thCI} to a Riemannian manifold gave rise to a problem how to decompose the De Rham-Hodge Laplacian on vector fields as sum of squares of Lie derivatives : this has been achieved in \cite{FangLuo} on Riemannian symmetric spaces.

\vskip 2mm
The purpose of this section is to derive a Feymann-Kac type representation for solutions to the following Navier-Stokes equation on $M$:

\begin{equation}\label{eq4.2}
  \begin{cases}
  \partial_t u+\nabla_u u+\nu\square u+\nabla p=0,\\
  \div(u)=0,\quad u|_{t=0}=u_0
  \end{cases}
  \end{equation}
where $\nu>0$.
\vskip 2mm

Consider a Nash embedding $J: M\ra \R^N$ as in Section \ref{sect3}; let ${\mathcal B}=\{e_1, \ldots, e_N\}$ be an orthonormal 
basis of $\R^N$ and $A_i(x)= (dJ(x))^*e_i, x\in M$. Let $\{u_t,\ t\in [0,T]\}$ be a time-dependent vector fields  in $C^{1,\beta}$
with $\beta>0$, then the following Stratanovich SDE on $M$

  \begin{equation}\label{eq4.3}
  d X_t=\sqrt{2\nu}\,\sum_{i=1}^N A_i(X_t)\circ dW_t^i + u_t(X_t)\,dt,\quad X_0=x\in M
  \end{equation}
 admits a unique solution $\{X_t, t\in [0,T]\}$ which defines a flow of diffeomorphisms of $M$ (see \cite{Bismut, IW, Elworthy}), 
 where $\{W_t^i; \, i=1, \ldots, N\}$ is a $N$-dimensional standard Brownian motion.
 
 \vskip 2mm
 The space $\X(M)$ of vector fields on $M$,  equipped with uniform norm 
 $\dis ||B||_\infty=\sup_{x\in M}|B_x|_{T_xM}$, is a Banach space.
  We equip $L(\X, \X)$, the space of linear map from $\X(M)$ into $\X(M)$, the norm
 of operator: 
 \begin{equation*}
 |||Q|||=\sup_{||B||_\infty\leq 1}||QB||_\infty.
 \end{equation*}
 
 For a diffeomorphism $\varphi: M\ra M$,  the pull-back $\varphi_*$  sends $\X(M)$ into $\X(M)$,  
 which is in the space $L(\X,\X)$ with $|||\varphi_*|||\leq ||\varphi||_{C^1}$. For a tensor $\T$ which sends $\X(M)$ to $\X(M)$, we denote
 $(\T_*B)(x)=\T(x)B_x$, then $|||\T_*|||\leq ||T||_\infty$.
 Now we consider the following linear differential equation on $L(\X,\X)$:
 \begin{equation}\label{eq4.4}
 \frac{dQ_t}{dt}= \nu\, Q_t\cdot (X_t^{-1})_*\ \T_*\ (X_t)_*,\quad Q_0={\rm identity}
 \end{equation}
 where $(X_t)$ is solution to SDE \eqref{eq4.3}.
 
 \begin{theorem}\label{th4.2} Let $\dis -\tilde\square = \sum_{i=1}^N \L_{A_i}^2 +\T_*$. Then  if $\tilde\square$ preserves the space of
 vector fields of divergence free, $u_t\in C^{2,\beta}$ is solution to 
 \begin{equation}\label{eq4.5}
  \begin{cases}
  \partial_t u_t+\nabla_{u_t} u_t+\nu\tilde\square u_t+\nabla p=0,\\
  \div(u_t)=0,\quad u|_{t=0}=u_0
  \end{cases}
  \end{equation}
 if and only if, for each $v\in\X(M)$ with $\div(v)=0$,
 
  \begin{equation}\label{eq4.6}
  \int_M \<u_t,v\>\, dx=\E\bigg(\int_M \big\<u_0,Q_t\,(X_{t}^{-1})_*v\big\>\, dx\bigg),\  \forall\, t\in [0,T].
  \end{equation}

Moreover, $u_t$ has the following more geometric expression
  \begin{equation}\label{eq4.7}
  u_t=\E\Bigl[ {\mathbf P}\Bigl(\rho_t\cdot (X_t^{-1})^\ast (\tilde Q_tu_0)^\flat\Bigr)^{\#}\Bigr]
  \end{equation}
where $\tilde Q_t$ is the adjoint of $Q_t$ in the sense that 
$\int_M \langle \tilde Q_t\, u, v\rangle\,dx=\int_M \langle u, Q_t\, v\rangle\,dx$, $\rho_t$ is the density of $(X_t)_\#(dx)$ with 
respect to $dx$ and ${\mathbf P}$ is the Leray-Hodge projection on the space of vector fields of divergence free.

\end{theorem}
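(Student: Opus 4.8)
The plan is to establish the equivalence \eqref{eq4.5}$\Leftrightarrow$\eqref{eq4.6} via an Itô-calculus computation on the $\R$-valued process obtained by pairing with a fixed divergence-free test field, and then to rewrite the resulting identity in the geometric form \eqref{eq4.7}. First I would fix $v\in\X(M)$ with $\div(v)=0$ and consider the random vector field $Y_t := Q_t\,(X_t^{-1})_*v$ on $M$. The key computational step is to differentiate $t\mapsto (X_t^{-1})_*v$ along the stochastic flow: by the standard formula for the pull-back of a vector field under a Stratonovich flow (see \cite{Bismut, IW, Elworthy}), one has
\begin{equation*}
\d\big((X_t^{-1})_*v\big) = \sqrt{2\nu}\,\sum_{i=1}^N \L_{A_i}\big((X_t^{-1})_*v\big)\circ\d W_t^i + \L_{u_t}\big((X_t^{-1})_*v\big)\,\d t.
\end{equation*}
Converting to Itô form produces the extra drift $\nu\sum_i \L_{A_i}^2\big((X_t^{-1})_*v\big)\,\d t$; combining with the ODE \eqref{eq4.4} for $Q_t$, whose generator was precisely engineered so that $Q_t$ cancels the $\T_*$-correction after conjugation by the flow, one gets that $Y_t$ solves
\begin{equation*}
\d Y_t = (\text{martingale part}) + \nu\Big(\sum_{i=1}^N\L_{A_i}^2 Y_t + \T_* Y_t\Big)\,\d t = (\text{mart.}) - \nu\,\tilde\square Y_t\,\d t.
\end{equation*}
Here I use the hypothesis that $\tilde\square$ preserves divergence-free fields, so that all $Y_t$ stay divergence free and the martingale part integrates to zero against $u_0$ under $\E\int_M\langle u_0,\cdot\rangle\,\d x$ (the $\L_{A_i}$-terms are divergence free by the remark after the definition of divergence in Section \ref{sect2}).

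Next I would take expectations and pair with $u_0$: setting $f(t) := \E\int_M\langle u_0, Y_t\rangle\,\d x$, the above gives $f'(t) = -\nu\,\E\int_M\langle u_0,\tilde\square Y_t\rangle\,\d x$. Independently, if $u_t$ solves \eqref{eq4.5}, then testing the PDE against the (fixed, divergence-free) field $v$ and using that $\int_M\langle\nabla p,v\rangle\,\d x=0$ and the duality $\int_M\langle\nabla_{u_t}u_t + \nu\tilde\square u_t,\,v\rangle$ can be transported back along the flow — more precisely, one checks that $g(t):=\int_M\langle u_t,v\rangle\,\d x$ satisfies the same linear ODE in disguise once one recognizes that the material-derivative structure $\partial_t u_t+\nabla_{u_t}u_t$ is exactly what the pull-back $(X_t^{-1})_*$ of the flow \eqref{eq4.3} trivializes. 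Matching $f$ and $g$ at $t=0$ (both equal $\int_M\langle u_0,v\rangle\,\d x$) and using uniqueness for the linear ODE/regularity of $u_t\in C^{2,\beta}$ gives \eqref{eq4.6}; conversely, differentiating \eqref{eq4.6} in $t$ and using that $v$ ranges over all divergence-free fields recovers \eqref{eq4.5} modulo a gradient, which is absorbed into $\nabla p$ by the Hodge decomposition.

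Finally, for \eqref{eq4.7} I would rewrite \eqref{eq4.6} by moving $Q_t$ and $(X_t^{-1})_*$ onto $u_0$ via their adjoints. By definition $\int_M\langle u_0,Q_t w\rangle = \int_M\langle\tilde Q_t u_0,w\rangle$, and the adjoint of the pull-back $(X_t^{-1})_*$ with respect to the (non-invariant) volume $\d x$ introduces precisely the Jacobian density $\rho_t$ of $(X_t)_\#(\d x)$ together with the cotangent transpose $(X_t^{-1})^\ast$ acting on the $1$-form $(\tilde Q_t u_0)^\flat$; applying the musical isomorphism $\sharp$ and inserting the Leray--Hodge projection $\mathbf P$ (legitimate because the left side of \eqref{eq4.6} only sees the divergence-free part) yields \eqref{eq4.7}. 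The main obstacle I anticipate is the careful Stratonovich-to-Itô bookkeeping together with justifying that $Q_t$, $X_t^{-1}$ and their $t$-derivatives have enough regularity (uniformly on $M$, in the operator norm $|||\cdot|||$ introduced before \eqref{eq4.4}) for Fubini and the differentiation-under-the-expectation to be valid — this is where the $C^{1,\beta}$ assumption on $u_t$ and the contraction estimates $|||\varphi_*|||\le\|\varphi\|_{C^1}$, $|||\T_*|||\le\|\T\|_\infty$ enter — and, more conceptually, verifying that the hypothesis "$\tilde\square$ preserves divergence-free fields" is exactly what is needed for the martingale terms to drop out and for the Leray projection in \eqref{eq4.7} to be consistent with the dynamics.
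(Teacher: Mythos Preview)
Your It\^o step is wrong in a way that breaks the whole mechanism. Kunita's formula for the inverse flow puts the Lie derivatives \emph{inside} the pull-back, not outside:
\[
d\big((X_t^{-1})_*v\big)=\sqrt{2\nu}\sum_{i=1}^N (X_t^{-1})_*(\L_{A_i}v)\,dW_t^i
+\nu\sum_{i=1}^N(X_t^{-1})_*(\L_{A_i}^2 v)\,dt+(X_t^{-1})_*(\L_{u_t}v)\,dt,
\]
because $X_{t+dt}^{-1}=X_t^{-1}\circ(\text{infinitesimal flow})^{-1}$, so the small-time Lie increment acts on $v$ first. With this and \eqref{eq4.4} the drift of $Y_t=Q_t(X_t^{-1})_*v$ is
\[
-\nu\,Q_t(X_t^{-1})_*(\tilde\square v)\,dt+Q_t(X_t^{-1})_*(\L_{u_t}v)\,dt,
\]
\emph{not} $-\nu\,\tilde\square Y_t\,dt$; the operator $Q_t$ was designed so that $dQ_t\cdot(X_t^{-1})_*v=\nu\,Q_t(X_t^{-1})_*\T_*v\,dt$ matches the $\T_*$-piece of $\tilde\square v$, which only works because both act on $v$ inside the pull-back. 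You also dropped the $\L_{u_t}$ drift entirely. The point of the hypothesis ``$\tilde\square$ preserves divergence-free fields'' is not that $Y_t$ stays divergence free; it is that $\tilde\square v$ (and $\L_{u_t}v$) are again admissible test fields, so after pairing with $u_0$ and taking expectation one can \emph{reapply} \eqref{eq4.6} at time $s$ to these new test fields and obtain $\int_M\langle u_s,\tilde\square v\rangle\,dx$ and $\int_M\langle u_s,\L_{u_s}v\rangle\,dx$. That self-referential use of \eqref{eq4.6} is what closes the argument and yields the weak form of \eqref{eq4.5}; your ``same linear ODE for $f$ and $g$'' scheme does not close, because $f'(t)$ involves the functional $f$ evaluated at the \emph{different} test fields $\tilde\square v$ and $\L_{u_t}v$, not at $v$.

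For the converse $\eqref{eq4.5}\Rightarrow\eqref{eq4.6}$ your plan is also off. The paper does not match ODEs: it defines $\tilde u_t$ by the formula \eqref{eq4.7}, shows by the forward computation that $\tilde u_t$ satisfies the same weak identity as $u_t$, and then observes that $z_t:=u_t-\tilde u_t$ solves the linear equation $\partial_t z_t=-\nu\,\tilde\square z_t-\L_{u_t}^*z_t$ with $z_0=0$, whence $z_t\equiv0$ by uniqueness. Your derivation of \eqref{eq4.7} from \eqref{eq4.6} via the adjoints $\tilde Q_t$ and $(X_t^{-1})^\ast$ together with the Jacobian $\rho_t$ is correct in spirit and matches the paper. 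Finally, the martingale term disappears under $\E$ simply because it is a martingale increment, not for any divergence-free reason.
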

 
 \begin{proof} Suppose \eqref{eq4.6} holds. By It\^o formula \cite[p.265, Theorem 2.1]{Kunita}, we have
  \begin{equation*}
  d (X^{-1}_t)_*(v)
  =\sum_{i=1}^N (X^{-1}_t)_*(\L_{A_i} v)\, dW^i_t+\nu \sum_{i=1}^N  (X^{-1}_t)_*(\L_{A_i}^2 v)\,dt
  + (X^{-1}_t)_*(\L_{u_t}v) \,dt.
\end{equation*}
 
We have, according to \eqref{eq4.4} and above formula,
\begin{equation*}
\begin{split}
d \Bigl[ Q_t\cdot (X^{-1}_t)_*v\Bigr]&=dQ_t\cdot (X^{-1}_t)_*v + Q_t\cdot d(X^{-1}_t)_*v\\
&=\nu\, Q_t\cdot (X^{-1}_t)_*\T_*v\, dt +\sum_{i=1}^N Q_t\cdot (X^{-1}_t)_*(\L_{A_i} v)\, dW^i_t\\
&\hskip 5mm +\nu \sum_{i=1}^N  Q_t\cdot (X^{-1}_t)_*(\L_{A_i}^2 v)\,dt
  + Q_t\cdot (X^{-1}_t)_*(\L_{u_t}v) \,dt,
 \end{split}
\end{equation*} 
since $(X_t)_*(X^{-1}_t)_*={\rm identity}$. Now using definition of $\tilde\square$, we get

  \begin{equation*}
  \begin{split}
  \int_M  \langle u_t, v\rangle\,dx=&\int_M \langle u_0, v\rangle\, dx 
  -\nu\,  \int_0^t \E\bigg(\int_M \big\langle u_0, Q_s\cdot(X^{-1}_s)_*(\tilde\square v)\big\rangle\, dx\bigg) ds\\
  &+\int_0^t \E\bigg(\int_M \big\langle u_0,\, Q_s\cdot(X^{-1}_s)_*(\L_{u_s}v)\big\rangle\, dx\bigg) ds.
  \end{split}
  \end{equation*}

Since $\tilde\square v$ and $\L_{u_s}v$ are of divergence free, to them we apply \eqref{eq4.6} to get

\begin{equation*}
\E\bigg(\int_M \big\langle u_0, Q_s\cdot(X^{-1}_s)_*(\tilde\square v)\big\rangle\, dx\bigg)
=\int_M \<u_s,\tilde\square v\>\, dx,
\end{equation*}

and 
\begin{equation*}
\E\bigg(\int_M \big\langle u_0, Q_s\cdot(X^{-1}_s)_*(\L_{u_s} v)\big\rangle\, dx\biggr)
=\int_M \<u_s,\L_{u_s} v\>\, dx,
\end{equation*}

Therefore we obtain the following equality:
\begin{equation*}
  \int_M\<u_t,v\>\,dx=\int_M\<u_0,v\>\,dx-\nu \int_0^t\!\!\int_M \<u_s, \tilde\square v \>\,dxds
  +\int_0^t\!\!\int_M \<u_s,\L_{u_s} v \>\,dxds.
  \end{equation*}
  
But the last term can be changed 
 \begin{align*}
  \int _M \<u_s,\L_{u_s} v \>\,dx&=\int_M \<u_s,\nabla_{u_s} v \>\,dx-\int_M \<u_s,\nabla_v u_s \>\,dx\\
  &=\int_M \<u_s,\nabla_{u_s} v \>\,dx-\frac12 \int_M v(|u_s|^2)\,dx
  =\int_M \<u_s,\nabla_{u_s} v \>\,dx.
  \end{align*}
Finally 
\begin{equation*}
  \int_M\<u_t,v\>\,dx=\int_M\<u_0,v\>\,dx-\nu \int_0^t\!\!\int_M \<u_s,\tilde\square v \>\,dxds
  +\int_0^t\!\!\int_M \<u_s,\nabla_{u_s} v \>\,dxds.
  \end{equation*}
  
It follows that for a.e. $t\geq 0$, 

\begin{equation*}
 \frac{\d}{\d t}\int_M \<u_t,v\>\,dx=\nu \int_M \<u_t,\tilde\square v \>\,dx  +\int_M \<u_t,\nabla_{u_t} v \>\,dx.
 \end{equation*}
 
Multiplying both sides by $\gamma\in C_c^1([0,\infty))$ and integrating by parts on $[0,\infty)$, we get

  \begin{equation*}
  \gamma(0)\int_M\<u_0,v\>\,dx+\int_0^\infty\!\!\int_M \big[\gamma'(t)\<u_t,v\> +\gamma(t) \<u_t,\nabla_{u_t} v \>
  +\nu\, \gamma(t)\<u_t,\tilde\square v\> \big] dxdt=0.
  \end{equation*}
The above equation is the weak formulation of the Navier--Stokes \eqref{eq4.5} on the manifold $M$. 

\vskip 2mm
To prove \eqref{eq4.7}, we note that
  \begin{align*}
  \int_M \rho_t\, \big\< (X_t^{-1})^\ast (\tilde Q_tu_0)^\flat, v\big\>\,dx
  &=\int_M \rho_t\, \big\< (\tilde Q_tu_0)^\flat, (X_t^{-1})_\ast v\big\>_{X_t^{-1}}\,dx\\
  &=\int_M \rho_t(X_t)\,\tilde\rho_t\, \big\< u_0^\flat,\ Q_t\cdot(X_t^{-1})_\ast v\big\> \,dx\\
  &=\int_M \big\< u_0^\flat, \ Q_t\cdot(X_t^{-1})_\ast v\big\> \,dx=\int_M \big\< u_0,\ Q_t\cdot (X_t^{-1})_\ast v \big\>_{T_x M}\,dx
  \end{align*}
where $\tilde\rho_t$ is the density of  $(X_t^{-1})_\#(dx)$ with respect to $dx$.
Now by \eqref{eq4.6}, we have:
\begin{equation*}
   \int_M \langle u_t, v\rangle\,dx=\E\bigg(\int_M \rho_t\, \big\< (X_t^{-1})^\ast (\tilde Q_tu_0)^\flat, v\big\>\,dx\bigg), \quad \div(v)=0.
 \end{equation*}

The formula \eqref{eq4.7} follows.

\vskip 2mm
For proving the converse, we use the idea in \cite[Theorem 2.3]{Zhang}. Let $u_t\in C^2 $ be a solution to \eqref{eq4.5}, then
  \begin{equation*}
  \int_M\<u_t,v\>\ dx=\int_M\<u_0,v\>\, dx-\nu \int_0^t\!\!\int_M \<u_s, \tilde\square v\>\, dxds
  +\int_0^t\!\!\int_M \<u_s,\L_{u_s} v\>\, dxds.
  \end{equation*}
  
 Define
  \begin{equation*}
  \tilde u_t=\E\Bigl[ {\mathbf P}\Bigl(\rho_t\cdot (X_t^{-1})^\ast (\tilde Q_tu_0)^\flat\Bigr)^{\#}\Bigr].
  \end{equation*}
Then  calculations as above lead to
  \begin{equation*}
  \int_M\<\tilde u_t,v\>\,\d x=\int_M\<u_0,v\>\,\d x-\nu \int_0^t\!\!\int_M \<\tilde u_s, \tilde\square v\>\,\d x\d s
  +\int_0^t\!\!\int_M \<\tilde u_s,\L_{u_s} v\>\,\d x\d s.
  \end{equation*}
Let $z_t=u_t-\tilde u_t$; we have
  \begin{equation*}
  \int_M\<z_t,v\>\,\d x= -\nu \int_0^t\!\!\int_M \<z_s, \tilde\square v\>\,\d x\d s +\int_0^t\!\!\int_M \<z_s,\L_{u_s} v\>\,\d x\d s.
  \end{equation*}
It follows that $(z_t)$ solves the following heat equation on $M$
  \begin{equation*}
  \frac{\d z_t}{\d t}=-\nu\,\tilde\square z_t - \L_{u_t}^*z_t,\quad z_0=0,
  \end{equation*}
  where $\L_{u_t}^*$ is the adjoint operator. 
By uniqueness of solutions, we get that $z_t=0$ for all $t\geq 0$. Thus $u_t=\tilde u_t$.
Then \eqref{eq4.7} follows. The proof of Theorem \ref{th4.2} is complete.
\end{proof}

\vskip2mm
\begin{remark}{\rm
By Proposition \ref{prop2.1}, $\dis -\square B= \sum_{i=1}^N \L_{A_i}^2B + 2 \T_1(B)$; Taking $\T=2\T_1$ in Theorem \ref{th4.2}, we obtain
a probabilistic representation formula for Navier-Stokes equation \eqref{eq4.2} on a general compact Riemnnian manifold, since $\square$ preserves 
the space of vector fields of divergence free.   According to Theorem \ref{th3.8}, the Ebin-Marsden Laplacian $\hat\square$
has the expression $\dis -\hat\square B= \sum_{i=1}^N \L_{A_i}^2B - 2 \T_2(B)$. However, by \eqref{eq1.3} and \eqref{eq1.4},

\begin{equation*}
\hat\square B= \square B + 2\Ric (B)\quad \hbox{\rm for } \div(B)=0;
\end{equation*}

therefore $\hat\square$ {\it does not} preserve
the space of vector fields of divergence free, except for the case where $\Ric=k\, {\rm Id}$, that is to say that $M$ is a Einstein manifold.}
\end{remark}

\begin{remark} {\rm Let $g$ be the Riemannian metric of $M$. A vector field $A$ on $M$ is said to be a Killing vector field if
\begin{equation*}
\L_A g=0.
\end{equation*}
Let $X, Y\in\X(M)$, we have, using Lie derivatives, 
\begin{equation*}
\partial_A \bigl( g(X,Y)\bigr)=(\L_A g)(X,Y)+ g(\L_AX,Y)+g(X,\L_AY), \eqno(i)
\end{equation*}
and using covariant derivatives,

\begin{equation*}
\partial_A \bigl( g(X,Y)\bigr)= g(\nabla_AX,Y)+g(X,\nabla_AY).\eqno(ii)
\end{equation*}
Since $\nabla_AX-\L_AX=\nabla_XA$,  combining $(i)$ and $(ii)$ yields
\begin{equation*}
(\L_Ag)(X,Y)= g(\nabla_XA,Y)+g(X,\nabla_YA)=2\Def(A)(X,Y).
\end{equation*}

That is to say that $A$ is a Killing vector field if and only it $\Def(A)=0$, which implies $\hat\square A=0$. Conversely if $\div(A)=0$
and $\hat\square A=0$, then $A$ is a Killing vector field on $M$ (see \cite{Yano}).  Does is it why Ebin and Marsden said that 
$\hat\square$ is more convenient in \cite{EM} ?
}
\end{remark}

\begin{proposition} Let $\rho_t$ be given in Theorem \ref{th4.2}; then the following stochastic partial equation (SPDE) holds
\begin{equation}\label{eq4.8}
d\rho_t = -\sqrt{2\nu}\bigl\langle dJ(x)\nabla\rho_t +{\rm Trace}(\alpha)\, \rho_t,\ dW_t\bigr\rangle
+\bigl( \nu\Delta \rho_t + \L_{u_t}\rho_t\bigr)\, dt.
\end{equation}

\end{proposition}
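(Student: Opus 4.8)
The plan is to derive the SPDE for $\rho_t$ by combining the known transport/continuity structure of the flow with the explicit Stratonovich-to-It\^o correction coming from the vector fields $A_i$. Recall that $\rho_t$ is the density of $(X_t)_\#(dx)$ with respect to $dx$, equivalently $\rho_t = \frac{1}{|\det DX_t^{-1}|}$ composed appropriately, so that for any test function $f$ one has $\int_M f\,\rho_t\,dx = \int_M f(X_t)\,dx$. Differentiating this identity in $t$ using It\^o's formula on $f(X_t)$, with $X_t$ solving \eqref{eq4.3}, will produce a weak formulation of the equation satisfied by $\rho_t$; reading off the strong form then gives \eqref{eq4.8}. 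So the first step is to apply the It\^o formula to $f(X_t)$: the Stratonovich term $\sqrt{2\nu}\sum_i A_i(X_t)\circ dW_t^i$ contributes $\sqrt{2\nu}\sum_i (A_if)(X_t)\,dW_t^i$ in the martingale part and $\nu\sum_i (A_i^2 f)(X_t)\,dt$ in the bounded-variation part, while the drift $u_t(X_t)\,dt$ contributes $(u_tf)(X_t)\,dt = (\L_{u_t}$-type$)$ term.

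The second step is to convert the operator $\sum_i A_i^2$ acting on functions into the Laplace--Beltrami operator plus a first-order correction. By \eqref{eq2.2} (equivalently Proposition \ref{prop3.3}, $\sum_i \nabla_{A_i}A_i = 0$) together with \eqref{eq2.1}, the generator $\nu\sum_i A_i^2$ equals $\nu\Delta$ on functions — this is exactly the statement that the family $\{A_i\}$ realizes Brownian motion on $M$ (see \cite{ELL, Stroock}), and it is the functional analogue of formula \eqref{eq2.14}. Hence $\int_M f\,\rho_t\,dx = \int_M f\,dx + \int_0^t \int_M (\nu\Delta f + \L_{u_s}f)(X_s)\,\cdots$, and after passing the expectation/pushforward through, $\int_M f\,\rho_t\,dx$ satisfies a closed equation. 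The third step is the crucial one: to obtain the \emph{stochastic} PDE (not merely the Fokker--Planck equation for $\E\rho_t$), I keep the martingale term and transpose the whole identity $\int_M f\,\rho_t\,dx = \int_M f(X_t)\,dx$ into an equation for $\rho_t$ itself by integrating by parts against $f$. The martingale part $\sqrt{2\nu}\sum_i\int_M (A_if)(X_t)\,dW_t^i$ must be rewritten as $\sqrt{2\nu}\sum_i \int_M f \cdot [\text{something involving } \div(A_i\rho_t)]\,dx\, dW_t^i$; using $\sum_i \div(A_i)A_i = 0$ (formula \eqref{eq2.7} / Theorem \ref{th3.1}) and $\div(A_i) = \langle\mathrm{Trace}(\alpha), e_i\rangle$ (Proposition \ref{prop3.5}), the vector $\sum_i (A_i\rho_t\text{-gradient term})e_i$ collapses to $dJ(x)\nabla\rho_t + \mathrm{Trace}(\alpha)\rho_t$, which is precisely the bracket in \eqref{eq4.8}.

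The main obstacle I anticipate is the bookkeeping in step three: correctly tracking the adjoint of the first-order operators $A_i$ acting on $\rho_t$ against the reference measure $dx$, since $\int_M (A_i f)\,\rho_t\,dx = -\int_M f\,(A_i\rho_t + \div(A_i)\rho_t)\,dx$, and then recognizing the $\R^N$-valued object $\sum_i\bigl(A_i\rho_t + \div(A_i)\rho_t\bigr)e_i$. Expanding, $\sum_i (A_i\rho_t)e_i = \sum_i \langle (dJ)^*e_i, \nabla\rho_t\rangle e_i = \sum_i \langle e_i, dJ(x)\nabla\rho_t\rangle e_i = dJ(x)\nabla\rho_t$, and $\sum_i \div(A_i)\rho_t\, e_i = \rho_t\sum_i\langle\mathrm{Trace}(\alpha),e_i\rangle e_i = \rho_t\,\mathrm{Trace}(\alpha)$; summing gives the claimed coefficient of $dW_t$. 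One must also verify that the It\^o correction in the martingale-to-density transposition does not produce extra drift beyond $\nu\Delta\rho_t$ — this is guaranteed because the same cancellation $\sum_i\nabla_{A_i}A_i = 0$ that makes $\nu\sum_i A_i^2 = \nu\Delta$ on functions also ensures the adjoint relation is clean; alternatively one checks directly that \eqref{eq4.8} is the formal $L^2(dx)$-adjoint of the generator of \eqref{eq4.3}, which is the standard duality between a diffusion and its density. Finally, the term $\L_{u_t}\rho_t$ in \eqref{eq4.8} arises as the transpose of $u_t\cdot\nabla$ against $dx$ combined with $\div(u_t)=0$, so that $\int_M (u_t f)\rho_t\,dx = -\int_M f\,\div(\rho_t u_t)\,dx = -\int_M f\,(\L_{u_t}\rho_t)\,dx$ using $\div(u_t)=0$; collecting the martingale and drift contributions yields \eqref{eq4.8}.
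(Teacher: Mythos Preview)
Your proposal is correct and follows exactly the paper's approach: apply It\^o's formula to $f(X_t)$, use $\sum_i \L_{A_i}^2 = \Delta$ on functions, integrate over $M$, transpose against $\rho_t$ via the defining relation $\int_M f\rho_t\,dx=\int_M f(X_t)\,dx$, and then regroup the martingale coefficients $\sum_i\bigl(\L_{A_i}\rho_t + \div(A_i)\rho_t\bigr)e_i$ into $dJ(x)\nabla\rho_t + \mathrm{Trace}(\alpha)\,\rho_t$ using Proposition~\ref{prop3.5}. One small point worth flagging: your own integration by parts for the drift correctly yields $-\L_{u_t}\rho_t$, which disagrees in sign with the $+\L_{u_t}\rho_t$ appearing in \eqref{eq4.8}; the paper's derivation contains the same slip, so this does not affect the method.
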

\begin{proof} Let $f$ be a $C^2$-function on $M$, by It\^o formula, we have
\begin{equation*}
d\, f(X_t)=\sqrt{2\nu}\sum_{i=1}^N (\L_{A_i}f)(X_t)\, dW_t^i + \nu\sum_{i=1}^N(\L_{A_i}^2 f)(X_t)\, dt + (\L_{u_t}f)(X_t)\, dt.
\end{equation*}
Since $\dis\Delta f=\sum_{i=1}^N\L_{A_i}^2f$ and 
\begin{equation*}
\int_M( \L_{A_i}f) (X_t)\ dx=\int_M \L_{A_i}f\, \rho_t\ dx=-\int_M f\ \bigl(\div(A_i)\rho_t + \L_{A_i}\rho_t\bigr)\, dx,
\end{equation*}
\begin{equation*}
\int_M (\Delta f)(X_t)\, dx=\int_M \Delta f\, \rho_t\ dx=\int_M f\, \Delta\rho_t\ dx, 
\end{equation*}

\begin{equation*}
 \int_M \L_{u_t}(X_t)\ dx=\int_M \L_{u_t}f \ \rho_t\ dx= \int_M f\ \L_{u_t}\rho_t\, dx,
\end{equation*}

and according to $\int_Mf(X_t)\ dx=\int_M f\ \rho_t\, dx$,
we get that $\rho_t$ satisfies the following SPDE
 \begin{equation}\label{eq4.9}
 d\rho_t = - \sqrt{2\nu} \sum_{i=1}^N \Bigl( \div(A_i)\rho_t + \L_{A_i}\rho_t\Bigr)\, dW_t^i 
 + \Bigl( \nu\Delta\rho_t + \L_{u_t}\rho_t\Bigr)\, dt.
\end{equation}

Using \eqref{eq3.11}, we have
\begin{equation*}
\sum_{i=1}^N \div(A_i)\rho_t\,dW_t^i= \langle{\rm Trace}(\alpha)\rho_t, \ dW_t\rangle.
\end{equation*}

Besides,
$\dis \sum_{i=1}^N \langle (dJ(x))^*e_i, \nabla\rho_t\rangle \, dW_t^i
=\sum_{i=1}^N \langle e_i, dJ(x)\nabla\rho_t\rangle \, dW_t^i=\langle dJ(x)\nabla\rho_t, dW_t\rangle$.
Therefore \eqref{eq4.8} follows from \eqref{eq4.9}.

\end{proof}


\begin{thebibliography}{99}


\bibitem{AC1} M. Arnaudon, A.B. Cruzeiro, Lagrangian Navier-Stokes diffusions on manifolds: variational principle and stability, 
\emph{Bull. Sci. Math.}, 136 (8) (2012),  857--881.

\bibitem{AC2} M. Arnaudon, A.B. Cruzeiro, Stochastic Lagrangian flows on some compact manifolds, \emph{ Stochastics}, 84  (2012), 367--381.

\bibitem{ACF}  M. Arnaudon, A.B. Cruzeiro, S. Fang, Generalized stochastic Lagrangian paths for the Navier--Stokes equation, 
\emph{Ann. Sc. Norm. Super. Pisa}, CI. Sci., 18 (2018), 1033--1060.


\bibitem{ACLZ} M Arnaudon, A B. Cruzeiro, C L\'eonard, JC Zambrini , 
An entropic interpolation problem for incompressible viscid fluids, https://hal.archives-ouvertes.fr/hal-02179693.

\bibitem{Arnold} V. I. Arnold, Sur la g\'eom\'etrie diff\'erentielle des groupes de Lie de dimension infinie et ses applications
 \`a l' hydrodynamique des fluides parfaits,  \emph{Ann. Inst. Fourier}, 16 (1966), 316--361.

\bibitem{Bismut} J.M. Bismut, \emph{M\'ecanique al\'eatoire},  {Lect. Notes in Maths}, 866, Springer-Verlag, 1981.

\bibitem{Chan} C.H. Chan, M. Czubak, M.M. Disconzi, 
 The formulation of the Navier-Stokes equations on Riemannian manifolds,
 \emph{J. Geom. Phys.}, 121 (2017), 335--346.
 
 \bibitem{Chorin}  A.J. Chorin,  Numerical study of slightly visous flow. \emph{J. Fluid Mech.} 57 (1973), 785--796.
 
\bibitem{Cruzeiro} F. Cipriano, A.B. Cruzeiro, { Navier-Stokes equations and diffusions on the group of homeomorphisms
of the torus}, \emph{Comm. Math. Phys.}  275 (2007), 255--269.
 
 \bibitem{Constantin} P. Constantin, G. Iyer, A stochastic Lagrangian representation of the three-dimensional incompressible Navier-Stokes equations, \emph{ Comm. Pure Appl. Math.}, 61 (2008), 330--345.

\bibitem{EM} D.G. Ebin, J.E. Marsden, Groups of diffeomorphisms and the motion of an incompressible fluid,
\emph{Ann. of Math.} 92 (1970), 102--163.

\bibitem{Elworthy} K.D. Elworthy, \emph{Stochastic differential equations on manifolds}, Lond. Math. Soc. Lect. Note, 70, Cambridge 
university Press, 1982.

\bibitem{ELL} K.D. Elworthy,  Y. Le Jan, X.M. Li, \emph{On the geometry of diffusion operators and stochastic flows}, 
{Lecture Notes in Mathematics}, 1720, Springer-Verlag, 1999.

\bibitem{FangLuo}  S. Fang, D. Luo,  Constantin and Iyer's representation formula for the Navier-Stokes equations on manifolds,
\emph{Potential Analysis},  48 (2018), 181--206.

\bibitem{Flandoli} B. Busnello, F. Flandoli, M. Romito, A probabilistic representation for the vorticity
 of a three- dimensional viscous fluid and for general systems of parabolic equations.
  \emph{Proc. Edinb. Math. Soc.}  48 (2005),  295--336.
  
 \bibitem{IW} N. Ikeda, S. Watanabe, \emph{Stochastic differential equations and diffusion processes}, North-Holland, Math. Library,
 24, 1981.

\bibitem{Kobayashi}, M. H. Kobayashi, On the Navier-Stokes equations on manifolds with curvature, 
\emph{J Eng Math},  (2008) 60:55--68

\bibitem{KN} S. Kobayashi, K. Nomuzu, \emph{Foundations of differential geometry}, vol II, A Wiley-Interscience Publication, Wiley, New Yprk, 1962.

\bibitem{Kunita} H. Kunita, Stochastic differential equations and stochastic flows of diffeomorphisms. 
\emph{\'Ecole d'\'et\'e de Probabilit\'es de Saint-Flour, XII--1982}, 143--303, Lecture Notes in Math., 1097, Springer, Berlin, 1984.

\bibitem{Luo} D. Luo,  Stochastic Lagrangian flows on the group of volume-preserving homeomorphisms of the spheres. 
\emph{Stochastics} 87 (2015), 680--701.

\bibitem{Malliavin} P. Malliavin, Formule de la moyenne, calcul des perturbations et th\'eorie d'annulation pour les formes harmoniques,
\emph{J. Funt. Analysis}, 17 (1974), 274-291.

\bibitem{MT} M. Mitrea, M. Taylor, Navier-Stokes equations on Lipschitz domains in Riemannian manifolds, \emph{Math. Ann.}, 321(2001), 955--987.

\bibitem{Nagasawa} T. Nagasawa, Navier-Stokes flow on Riemannian manifolds,  \emph{Nonlinear Analysis theory, Method and Applications},
30 (1997), 825-832.

\bibitem{Pier} V. Pierfelice, The incompressible Navier-Stokes equations on non-compact manifolds, \emph{J. Geom. Anal.}, 27 (2017), 577--617.

\bibitem{Stroock} D. Stroock, \emph{An introduction to the analysis of paths on a Riemannian manifold}, Mathematical Surveys and Monographs, 74. {American Mathematical Society, Providence, RI}, 2000.

\bibitem{Taylor} M. Taylor,  \emph{Partial Differential Equations III: Nonlinear Equations, Nonlinear equations}, 
Vol. 117, Applied Mathematical Sciences, Springer New York second edition (2011).

\bibitem{Teman}  R. Teman,   \emph{Navier-Stokes equations and nonlinear functional analysis}, Second edition. CBMS-NSF Regional Conference Series in Applied Mathematics, 66, {Society for Industrial and Applied Mathematics (SIAM), Philadelphia, PA}, 1995.

\bibitem{TemamW} R. Temam, S. Wang, Inertial forms of Navier-Stokes equations on the sphere, \emph{J. Funct. Analysis},
117 (1993), 215--242.

\bibitem{Yano}  K. Yano, on harmonic and Killing vector fields, \emph{Ann. Math.}, 55 (1952), 38-45.

\bibitem{Zhang} X. Zhang, A stochastic representation for backward incompressible Navier--Stokes equations, \emph{Probab. Theory Related Fields} 148 (2010), no. 1--2, 305--332.

\end{thebibliography}
\end{document}